\documentclass[12pt,twoside]{article}
\usepackage{amssymb,amsthm}
\usepackage[namelimits,sumlimits,fleqn]{amsmath}
\usepackage{setspace}
\usepackage[ hmargin={2.2cm,2.4cm}, top=36mm,
    a4paper,pdftex, textheight=230mm,
    headheight=20mm, headsep=10mm, bindingoffset=0.9cm]
    {geometry}
\usepackage[small, margin=20pt]{caption}
\usepackage[usenames,dvipsnames]{color}
\definecolor{darkblue}{rgb}{0.0,0.0,0.6}
\usepackage{float}\restylefloat{figure}
\usepackage{url}
\usepackage[backref=page]{hyperref}
\hypersetup{colorlinks=true,breaklinks=true,
            linkcolor=darkblue,urlcolor=darkblue,
            anchorcolor=darkblue,citecolor=darkblue}

\usepackage{fancyhdr}
\pagestyle{fancy} \fancyhead{} \fancyhead[CO]{Incidence Identity in Finite Fields} \fancyhead[CE]{B. Murphy and G. Petridis} \fancyfoot{}
\fancyfoot[C]{\thepage}

\setlength{\parskip}{1.3 ex plus0.5ex minus 0.2ex}
\setlength{\parindent}{0em}

\allowdisplaybreaks[4]

\newcommand{\ip}[2]{\ensuremath{\langle #1,#2\rangle}}

\title{A Point-Line Incidence Identity in Finite Fields, and Applications}
\author{Brendan Murphy and Giorgis Petridis}
\date{}

\theoremstyle{plain}
\newtheorem{theorem}{Theorem}
\newtheorem{lemma}{Lemma}

\newtheorem{corollary}{Corollary}

\theoremstyle{definition}
\newtheorem*{acknowledgement}{Acknowledgement}
\newtheorem*{unremark}{Remark}
\newtheorem*{claim}{Claim}


\renewcommand*{\backref}[1]{}
\renewcommand*{\backrefalt}[4]{%
    \ifcase #1 (Not cited.)%
    \or        (Cited on page~#2.)%
    \else      (Cited on pages~#2.)%
    \fi}

\begin{document}

\onehalfspacing

\pagenumbering{arabic}

\setcounter{section}{0}

\bibliographystyle{plain}

\maketitle

\begin{abstract}
Let $E \subseteq \mathbb{F}_q^2$ be a set in the 2-dimensional vector space over a finite field with $q$ elements. We prove an identity for the second moment of its incidence function and deduce a variety of existing results from the literature, not all naturally associated with lines in $\mathbb{F}_q^2$, in a unified and elementary way.
\end{abstract}

\section[Introduction]{Introduction}
\label{Introduction}

\let\thefootnote\relax\footnotetext{The second author is supported by the USA NSF DMS Grant 1500984.}

The first lemma in the breakthrough paper of Bourgain, Katz, and Tao \cite{BKT2004} on the sum-product phenomenon in finite fields states that if $A, B \subseteq \mathbb{F}_q$ are sets in a finite field with $q$ elements, then there exists $0 \neq \xi \in \mathbb{F}_q^*$ such that the set
\[
A + \xi B = \{a + \xi b : a \in A, b \in B\}
\]
has cardinality at least a constant multiple of the minimum of $q$ and $|A| |B|$. 

Interpreted geometrically, the lemma states that for all Cartesian products $E = A \times B \subseteq \mathbb{F}_q^2$ in the 2-dimensional vector space over $\mathbb{F}_q$, there exists a direction $(1, \xi)$ such that the projection of $E$ onto this direction, denoted by $E \cdot (1,\xi)$, is about as large it can be
\[
|E \cdot (1,\xi)| = |\{ v \cdot (1,\xi) : v \in E\}| = \Omega(\min\{q, |E|\}).
\]

The purpose of this note is to use the simple observation that both the statement and the proof of the lemma can be generalised to general sets $E \subseteq \mathbb{F}_q^2$ and deduce in a unified way various sum-product results in the literature, even some that on the surface do not appear to be associated with lines in $\mathbb{F}_q^2$. 

A closer inspection of the second proof provided by Bourgain, Katz, and Tao reveals that the authors are computing the second moment of the \emph{incidence function} associated with  $ A \times B \subseteq \mathbb{F}_q^2$. We consider the incidence function for all sets $E \subseteq \mathbb{F}_q^2$. Given a line $\ell \subset \mathbb{F}_q^2$ we denote by $i(\ell)$ the number of incidences of $\ell$ with $E$
\begin{equation}\label{i}
i_E(\ell) = i(\ell) = | \ell \cap E|.
\end{equation}

The key observation is the following identity on the second moment of the incidence function $i$. 
\begin{lemma} \label{var i}
Let $E\subseteq F_q^2$ and let $i$ be the incidence function defined in \eqref{i}.
The following identity holds.
\[
\sum_\ell i(\ell)^2 = |E|^2 +  q|E|.
\]
Hence
\[
\sum_\ell \left(i(\ell) - \frac{|E|}{q}\right)^2  \leq  q |E|.
\] 
Both sums are over all lines in $\mathbb{F}_q^2$. 
\end{lemma}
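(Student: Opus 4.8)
The plan is to expand the square and interchange the order of summation, reducing everything to counting lines through pairs of points of $E$. Writing the indicator of the event $p\in\ell$ as $\mathbf{1}[p\in\ell]$, so that $i(\ell)=\sum_{p\in E}\mathbf{1}[p\in\ell]$, I would compute
\[
\sum_\ell i(\ell)^2 = \sum_\ell \sum_{p,p'\in E}\mathbf{1}[p\in\ell]\,\mathbf{1}[p'\in\ell] = \sum_{p,p'\in E}\#\{\ell : p,p'\in\ell\},
\]
where the inner quantity is simply the number of lines passing through both $p$ and $p'$.

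The key geometric input is the elementary incidence combinatorics of the affine plane $\mathbb{F}_q^2$: through a single point there pass exactly $q+1$ lines (one for each of the $q+1$ directions), while through two distinct points there passes exactly one line. I would therefore split the double sum according to whether $p=p'$. The $|E|$ diagonal terms each contribute $q+1$, giving $(q+1)|E|$, and the $|E|^2-|E|$ off-diagonal ordered pairs each contribute $1$, giving $|E|^2-|E|$. Adding these produces $|E|^2+q|E|$, which is precisely the claimed identity.

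For the second statement I would expand the square and substitute three counts:
\[
\sum_\ell \Bigl(i(\ell)-\tfrac{|E|}{q}\Bigr)^2 = \sum_\ell i(\ell)^2 - \frac{2|E|}{q}\sum_\ell i(\ell) + \frac{|E|^2}{q^2}\sum_\ell 1 .
\]
The first sum is the identity just established; the first moment is $\sum_\ell i(\ell)=(q+1)|E|$, obtained by summing over the $q+1$ lines through each of the $|E|$ points; and the total number of lines is $\sum_\ell 1=q(q+1)$. A short algebraic simplification collapses the right-hand side to $q|E|-|E|^2/q$, and since the subtracted term is nonnegative the desired bound $\le q|E|$ follows at once. (In fact this shows the inequality holds with room to spare.)

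There is no genuine analytic obstacle here: once the order of summation is swapped, the whole argument is bookkeeping. The only thing one must get right is the point-line counting in the affine plane, namely the values $q+1$, $1$, and $q(q+1)$. I would be especially careful with the last of these, since inadvertently using the projective count $q^2+q+1$ in place of the affine count $q(q+1)=q^2+q$ is the easiest error to make and would throw off the final cancellation.
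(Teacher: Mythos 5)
Your proposal is correct and follows essentially the same route as the paper: expanding the square, interchanging summation, and splitting into diagonal terms (each contributing $q+1$) and off-diagonal terms (each contributing $1$), then deducing the variance bound using $\sum_\ell i(\ell)=(q+1)|E|$ and the affine line count $q(q+1)$, which collapses to the exact value $q|E|-|E|^2/q\leq q|E|$ just as in the paper. Your cautionary note about not confusing the affine count $q(q+1)$ with the projective count $q^2+q+1$ is well taken and your bookkeeping is right.
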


We provide a proof in Section~\ref{Main lemma} and prove a generalisation to higher dimensions in Section~\ref{HDim}. In the remaining sections we derive variants of existing results from the literature. A detailed review of the relevant bibliography is given in each section.
\begin{enumerate}
\item \emph{Discrete Marstrand}. A generalisation of the Bourgain-Katz-Tao lemma to all sets, which can also be thought of as a discrete version of a classical theorem of Marstrand on projections in Euclidean space \cite{Marstrand1954}: Let $E \subseteq \mathbb{F}_q^2$. There exists $0 \neq \xi \in \mathbb{F}_q^*$ such that
\[
|E \cdot (1, \xi) | \geq \frac{1}{2} \min\{|E|, q\}.
\] 
This result was communicated to the authors by Alex Iosevich and is proved in Section~\ref{Maarstrand}.
\item \emph{Vinh's point-line incidence bound}. A point-line incidence theorem of Vinh \cite{Vinh2011}: Let $E \subseteq \mathbb{F}_q^2$ and $\mathcal{L}$ be a collection of lines in $\mathbb{F}_q^2$. The number
\[
I(E,\mathcal{L}) = \sum_{\ell \in \mathcal{L}} i(\ell)
\] 
of point-line incidences satisfies
\[
\left| I(E, \mathcal{L}) - \frac{|E||\mathcal{L}|}{q} \right| \leq \sqrt{q |E| |\mathcal{L}|}.
\] 
Proved in Section~\ref{Vinh}.
\item \emph{Pinned dot products}. A slightly stronger version of a theorem of Chapman, Erdo{\u{g}}an, Hart, Iosevich, and Koh on pinned dot products from \cite{CEHIK2012}. Let $E \subseteq \mathbb{F}_q^2$ and $\mathrm{Dir}(E) \subseteq \mathbb{F}_q$ be the set of directions determined by $E$ (that is the set of lines through the origin incident to $E$, c.f. Section~\ref{Pinned dot prod} just above Theorem~\ref{Pinned dot} on p.\pageref{Pinned dot}). Suppose that $|E| |\mathrm{Dir}(E)|> q^{2}$. There exists $e \in E$ such that
\[
E \cdot e = \{ u \cdot e : u \in E\}
\]
contains half the elements of $\mathbb{F}_q$.

We also investigate the special case where $E = A \times A$ and prove statements like the following result on pinned dot products of Cartesian products in $\mathbb{F}_q^2$. Let $A \subseteq \mathbb{F}_q$ and suppose that $|A|^2 |AA^{-1}| > q^{2}$, where $AA^{-1} = \{ab^{-1}: a,b \in A\}$. There exists $a,b \in A$ such that
\[
\frac{q}{2} \leq |aA+bA| = |\{ ac+bd : c,d \in A\}|.
\]
Proved in Section~\ref{Pinned dot prod}.
\item \emph{Pinned algebraic distances}. The algebraic distance between two points $u=(u_1,u_2) ,v=(v_1,v_2) \in \mathbb{F}_q^2$ is defined as
\[
\| u - v\| = (u-v) \cdot (u-v) = (u_1-v_1)^2 + (u_2-v_2)^2.
\] 

We prove some results on the set of algebraic distances determined by $E\subseteq \mathbb{F}_q^2$ similar to those in \cite{Iosevich-Rudnev2007,  CEHIK2012, HLRN2016}. Let $E \subseteq \mathbb{F}_q^2$ and $\ell$ be a line in $\mathbb{F}_q^2$. Suppose that $|E| |E \cap \ell | > 2 q^{2}$. There exists $e \in E\cap \ell$ such that the pinned distance set
\[
\{ \| v - e \| : v \in E\}
\]
contains half the elements of $\mathbb{F}_q$.

In the case where $E = A \times A$ for $A \subseteq \mathbb{F}_q$ and $\ell$ is the span of $(1,1)$, the above becomes: $|A|^3 > 2 q^{2}$ implies there exists $a \in A $ such that
\[
\frac{q}{2} \leq |(a-A)^2+(a-A)^2| = |\{ (a-b)^2+(a-c)^2 : b,c \in A\}|.
\]
We also prove a slightly stronger variant. Suppose that $|A|^2 |D| > 2q^{2}$, where $D=A-A = \{a - b: a,b \in A\}$. There exists $a \in A$ such that
\[
\frac{q}{2} \leq |(a-A)^2+D^2| = |\{ (a-b)^2+s^2 : b \in A, s \in D\}|.
\]
Proved in Section~\ref{Iosevich-Rudnev}.
\item \emph{Higher dimensions}. We also prove some similar in nature results of Chapman, Erdo{\u{g}}an, Hart, Iosevich, and Koh from \cite{CEHIK2012} on $\mathbb{F}_q^d$ (the $d$-dimensional vector space over $\mathbb{F}_q$) for $d\geq 2$ in Section~\ref{HD}.
\item \emph{Generalisation to block designs}. We generalise Lemma~\ref{var i} to the setting of \emph{block designs}. In \cite{lund2014incidence}, Lund and Saraf generalised Vinh's method to block diagrams. We show that the same elementary method that reproves Vinh's point-line incidence bound can be used to reprove Lund and Saraf's incidence bound for block designs.

\item \emph{Comparison of our results to related methods}.
In the second to last section, we compare our method to the spectral graph theory method of Vinh \cite{Vinh2011} and Lund and Saraf \cite{lund2014incidence}.
We also show how the graph theoretic argument can be simplified to avoid the use of eigenvalues or singular values; in fact, we will see that a key inequality used to prove the ``expander mixing lemma'' is actually an \emph{equality}\/ in this context.

In the final section, we show how our elementary argument for block designs can be modified to yield a bound of Cilleruelo on the cardinality of Sidon sets \cite{Cilleruelo2012}.
\end{enumerate}
\begin{acknowledgement}
The authors would like to than Alex Iosevich, Jonathan Pakianathan, and Misha Rudnev for helpful conversations. They would also like to thank the referee for an insightful report.
\end{acknowledgement}

\section[Proof of Lemma~1 and a corollary]{Proof of Lemma~\ref{var i} and a corollary}
\label{Main lemma}

We first prove Lemma~\ref{var i} and then draw a useful quantitative consequence. The proof is a standard second moment calculation driven by the fact that any collection of lines in $\mathbb{F}_q^2$ is a pseudorandom collection. That is, for any two distinct lines $\ell, \ell'$ we have $\displaystyle \frac{|\ell \cap \ell'|}{q^2} \leq \frac{1}{q^2} = \frac{1}{q} \, \frac{1}{q} = \frac{|\ell |}{q^2} \, \frac{|\ell'|}{q^2}$.

\begin{proof}[Proof of Lemma~\ref{var i}]
Sums are over all lines in $\mathbb{F}_q^2$.  We denote by $\ell$ the characteristic function of a line $\ell$.
\begin{align*}
\sum_\ell i(\ell)^2 
& = \sum_\ell \left( \sum_{v \in E} \ell(v) \right)^2 \\
& = \sum_\ell  \sum_{v,v' \in E} \ell(v) \ell(v') \\
& = \sum_{v \in E} \sum_\ell \ell(v) + \sum_{v\neq v' \in E} \sum_{\ell} \ell(v) \ell(v')\\
& =  |E| (q+1) + |E| (|E| - 1) \\
& = |E|^2 + q |E|.
\end{align*}
In the penultimate line we used the facts that $q+1$ lines are incident to a point and that two distinct points are incident to a unique line.

The derivation of the second conclusion is similar to the proof of the well-known expression for variance. 
\begin{align*}
\sum_\ell \left(i(\ell) - \frac{|E|}{q}\right)^2 
& = \sum_\ell i(\ell)^2 - 2 \frac{|E|}{q} \sum_\ell i(\ell) + q(q+1) \frac{|E|^2}{q^2} \\
& = \sum_\ell i(\ell)^2 - 2 \frac{|E|}{q} (q+1) |E| +(q+1) \frac{|E|^2}{q}\\
& = \sum_\ell i(\ell)^2 - |E|^2 - \frac{|E|^2}{q} \\
& \leq \sum_\ell i(\ell)^2 - |E|^2 \\
& =  q |E|.
\end{align*}
\end{proof}

Next, we deduce a quantitative version of the statement that if $E \subseteq \mathbb{F}_q^2$ and $\Theta \subseteq \mathbb{F}_q$ are ``large'', then there is a direction  $(1,\theta)$ for some $\theta \in \Theta$ such that the projection of $E$ onto $(1,\theta)$ has (nearly maximum) cardinality $\Omega(q)$. 

\begin{corollary} \label{Good direction}
Let $E \subseteq \mathbb{F}_q^2$ and $\Theta \subseteq \mathbb{F}_q$. There exists $\theta \in \Theta$ such that
\[
|E \cdot (1, \theta)|  \geq q \, \frac{|E| |\Theta|}{q^2 + |E| |\Theta|}.
\]
Hence $|E| |\Theta| > q^2$ implies there exists $\theta \in \Theta$ such that $|E \cdot (1, \theta)| > q/2$.
\end{corollary}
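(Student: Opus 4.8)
The plan is to reinterpret the projection $E\cdot(1,\theta)$ in terms of incidences and then combine a Cauchy--Schwarz inequality with the variance bound of Lemma~\ref{var i}. The starting observation is that, for a fixed $\theta$, the level sets $\{v : v\cdot(1,\theta)=c\}$ with $c\in\mathbb{F}_q$ form the \emph{parallel class} of $q$ lines in the direction determined by $\theta$, and these $q$ lines partition $\mathbb{F}_q^2$. Writing $\ell_{\theta,c}$ for the line on which $v\cdot(1,\theta)$ takes the value $c$, the cardinality $|E\cdot(1,\theta)|$ is exactly the number of these lines that meet $E$, i.e. the number of $c$ with $i(\ell_{\theta,c})\geq 1$. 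Moreover every point of $E$ lies on precisely one line of the class, so $\sum_{c} i(\ell_{\theta,c})=|E|$.

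First I would apply Cauchy--Schwarz within a single parallel class. Since $|E|=\sum_{c} i(\ell_{\theta,c})$ is a sum of $|E\cdot(1,\theta)|$ nonzero terms,
\[
|E|^2 \;\leq\; |E\cdot(1,\theta)|\,\sum_{c} i(\ell_{\theta,c})^2 ,
\]
so that, setting $S_\theta=\sum_c i(\ell_{\theta,c})^2$, we get $|E\cdot(1,\theta)|\geq |E|^2/S_\theta$. The task is thereby reduced to bounding $\sum_{\theta\in\Theta} S_\theta$ from above. Next I would feed in Lemma~\ref{var i}: because the parallel classes indexed by distinct $\theta$ are disjoint families of lines and every summand in the variance inequality is nonnegative, restricting that inequality to the lines in the classes $\theta\in\Theta$ gives $\sum_{\theta\in\Theta}\sum_c (i(\ell_{\theta,c})-|E|/q)^2 \leq q|E|$. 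Expanding each inner sum using $\sum_c i(\ell_{\theta,c})=|E|$ together with the fact that a class contains exactly $q$ lines collapses it to $S_\theta - |E|^2/q$, whence $\sum_{\theta\in\Theta} S_\theta \leq q|E| + |\Theta|\,|E|^2/q$.

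Finally I would combine the two estimates by a pigeonhole/averaging argument. Let $M=\max_{\theta\in\Theta}|E\cdot(1,\theta)|$. From the Cauchy--Schwarz step, $S_\theta\geq |E|^2/M$ for every $\theta$, so $\sum_{\theta\in\Theta} S_\theta\geq |\Theta|\,|E|^2/M$; comparing this with the upper bound and solving the resulting linear inequality for $M$ yields
\[
M \;\geq\; \frac{q\,|E|\,|\Theta|}{q^2+|E|\,|\Theta|},
\]
which is the claimed bound. The ``hence'' clause then follows at once, since $|E||\Theta|>q^2$ forces $q^2+|E||\Theta|<2|E||\Theta|$ and hence $M>q/2$.

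I expect the only genuine subtlety to lie in the set-up rather than the inequalities: one must correctly identify $|E\cdot(1,\theta)|$ with the number of lines of a parallel class that meet $E$, and verify that the classes indexed by distinct $\theta\in\Theta$ are pairwise disjoint so that the variance bound may legitimately be summed over their union. Once that dictionary is in place the remaining steps are routine. One point worth flagging is that it is the sharper variance form of Lemma~\ref{var i} — and not the cruder identity $\sum_\ell i(\ell)^2=|E|^2+q|E|$ — that produces the correct $q^2$ (rather than a term like $q|E|$) in the denominator, and this is precisely what is needed to obtain the clean $q/2$ conclusion for all ranges of $|E|$ and $|\Theta|$.
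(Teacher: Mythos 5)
Your proof is correct and takes essentially the same route as the paper's: restrict the variance bound of Lemma~\ref{var i} to the parallel classes of lines indexed by $\Theta$, collapse each inner sum using $\sum_t i(\ell_{\theta,t})=|E|$, and apply Cauchy--Schwarz to bound the sum of squares below by $|E|^2/|E\cdot(1,\theta)|$. The only cosmetic difference is ordering---the paper pigeonholes one good $\theta$ before expanding, while you sum over all $\theta\in\Theta$ and extract the maximum at the end---and your closing remark about why the variance form (rather than the raw second-moment identity) is needed is also accurate.
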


\begin{proof}
We show that Lemma~\ref{var i} implies that $E$ is equidistributed on lines orthogonal to some direction in $\Theta$. The Cauchy-Schwartz inequality then implies the conclusion. 

Let us write $v_\theta = (1, \theta)$ for each $\theta \in \Theta$ and $\ell_{\theta,t} = \{ x \in \mathbb{F}_q^2 : x \cdot v_\theta = t\}$ for the line in $\mathbb{F}_q^2$ orthogonal to $v_\theta$ and incident to $(t,0)$. Note that the lines $\ell_{\theta,t}$ are distinct. The second statement in Lemma~\ref{var i} implies
\[
\sum_{\theta \in \Theta} \sum_{t \in \mathbb{F}_q} \left(i(\ell_{\theta,t}) - \frac{|E|}{q}\right)^2 \leq \sum_{\ell} \left(i(\ell) - \frac{|E|}{q}\right)^2 \leq q|E|.
\] 
Therefore there exists $\theta \in \Theta$ such that 
\[
\sum_{t \in \mathbb{F}_q} \left(i(\ell_{\theta,t}) - \frac{|E|}{q}\right)^2 \leq \frac{q|E|}{|\Theta|}.
\]
Expanding the square on the left side gives
\[
\sum_{t \in\mathbb{F}_q} i(\ell_{\theta,t})^2 - 2 \frac{|E|}{q} \sum_{t \in \mathbb{F}_q} i(\ell_{\theta,t}) + \frac{|E|^2}{q} = \sum_{t \in\mathbb{F}_q} i(\ell_{\theta,t})^2 - 2 \frac{|E|^2}{q} + \frac{|E|^2}{q} = \sum_{t \in E \cdot v_\theta} i(\ell_{\theta,t})^2 - \frac{|E|^2}{q}.
\]
Above, we used the fact that the collection of lines $\{ \ell_{\theta,t}\}_{t \in \mathbb{F}_q}$ partitions $\mathbb{F}_q^2$ and that $i(\ell_{\theta,t}) = 0$ when $t \notin E \cdot v_\theta$. Substituting above gives
\[
\sum_{t \in E \cdot v_\theta} i(\ell_{\theta,t})^2 \leq \frac{|E|^2}{q} + \frac{q|E|}{|\Theta|}.
\]
By the Cauchy-Schwarz inequality the left side is bounded below by
\[
\frac{\left( \sum_{t \in E \cdot v_\theta} i(\ell_{\theta,t}) \right)^2}{|E \cdot v_\theta|} = \frac{|E|^2}{|E \cdot v_\theta|}.
\]
Some algebraic manipulations yield $\displaystyle |E \cdot v_\theta| \geq \frac{q |E| |\Theta|}{q^2 + |E| |\Theta|}.$ 

The second conclusion follows from the fact that the above lower bound for $|E \cdot v_\theta|$ is an increasing function on the quantity $|E| |\Theta|$ and so is minimised at the minimum value of $|E| |\Theta|$.
\end{proof}

\section{Higher dimensions}
\label{HDim}

The arguments of the previous section work equally well in higher dimensions. As we will briefly mention high-dimensional applications of our method, we give full proofs of analogous statements to those of the previous section.

In $\mathbb{F}_q^d$ we deal with a set of points $E \subseteq \mathbb{F}_q^d$ and a collection of hyperplanes $\mathcal{H}$. A hyperplane is simply a translate of a $(d-1)$-dimensional subspace and is defined algebraically by $\{ v : v \cdot e = t\}$ for some $e\in \mathbb{F}_q^d$ and $t \in \mathbb{F}_q$. Note that there are 
\[
q^{d-1}+q^{d-2} + \dots + 1 = \frac{q^d-1}{q-1}
\] 
choices for $e$, each yielding $q$ distinct hyperplanes and hence making the total number of hyperplanes equal to $\frac{q(q^{d}-1)}{q-1}$. A natural way to count the possible choices of $e$ is to write $*$ for an arbitrary element for $\mathbb{F}_q$ and note that $e$ takes one of the following forms $(*,\dots,*,*,1), (*,\dots,*,1,0),\dots, (*,1,0,\dots,0), (1,0,\dots,0)$. Before moving on note that for each such $e$, the collection of hyperplanes $\{ x : e \cdot x = t\}_{t\in \mathbb{F}_q}$ partitions $\mathbb{F}_q^d$. We call such hyperplanes orthogonal to $e$.
 
Let us now prove a higher dimensional analogue of Lemma~\ref{var i}. The incidence function $i$ counts the incidences between a fixed point set $E$ and hyperplanes $h$. 
\begin{lemma} \label{var i HD}
Let $d\geq 2$, $E\subseteq F_q^d$, and $i$ be the incidence function defined by $i(h) = |E \cap h|$, where $h \subset \mathbb{F}_q^d$ is a hyperplane. The following identity holds.
\[
\sum_h i(h)^2 = \frac{q^{d-1}-1}{q-1} |E|^2 + q^{d-1} |E|.
\]
Hence
\[
\sum_h \left(i(h) - \frac{|E|}{q}\right)^2  \leq  q^{d-1} |E|.
\] 
The sums are over all hyperplanes in $\mathbb{F}_q^d$. 
\end{lemma}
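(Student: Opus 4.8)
The plan is to follow the template of the proof of Lemma~\ref{var i} essentially verbatim, substituting for the two planar incidence counts (``$q+1$ lines through a point'' and ``a unique line through two points'') their $d$-dimensional analogues. Writing $h$ also for the characteristic function of a hyperplane $h$, I would first expand the square and split off the diagonal:
\[
\sum_h i(h)^2 = \sum_h \left( \sum_{v \in E} h(v)\right)^2 = \sum_{v \in E} \sum_h h(v) + \sum_{v \neq v' \in E} \sum_h h(v) h(v').
\]
The two inner sums are, respectively, the number of hyperplanes through a single point and the number of hyperplanes through a fixed pair of distinct points; once these are known, the identity follows by bookkeeping.

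Supplying these two counts is the heart of the argument, and the two-point count is the step I expect to require the most care. For a single point $v$ the count is immediate: among the $\frac{q^d-1}{q-1}$ admissible directions $e$ already isolated in the preceding discussion, each contributes exactly one hyperplane $\{x : x\cdot e = e\cdot v\}$ through $v$, so the number of hyperplanes through a point is $\frac{q^d-1}{q-1}$ (which specialises to $q+1$ when $d=2$). For two distinct points $v,v'$ I would set $w = v - v' \neq 0$ and observe that a hyperplane $\{x : x\cdot e = t\}$ contains both precisely when $e\cdot w = 0$ and $t = e\cdot v$. The point to check is that fixing $e$ already \emph{forces} the offset $t$, so that containing hyperplanes are in bijection with admissible directions orthogonal to $w$; these are the projective classes of nonzero solutions of $e\cdot w = 0$, which form a $(d-1)$-dimensional subspace, giving $\frac{q^{d-1}-1}{q-1}$ hyperplanes through the pair (specialising to $1$ when $d=2$).

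Substituting these counts yields
\[
\sum_h i(h)^2 = \frac{q^d-1}{q-1}\,|E| + \frac{q^{d-1}-1}{q-1}\,|E|(|E|-1) = \frac{q^{d-1}-1}{q-1}\,|E|^2 + q^{d-1}|E|,
\]
where the final simplification uses the identity $\frac{q^d-1}{q-1} - \frac{q^{d-1}-1}{q-1} = q^{d-1}$. This is the claimed formula.

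For the second conclusion I would repeat the variance expansion of Lemma~\ref{var i}, now using the total hyperplane count $\frac{q(q^d-1)}{q-1}$ together with $\sum_h i(h) = \frac{q^d-1}{q-1}|E|$. After collecting the three $|E|^2$ contributions (from $\sum_h i(h)^2$, the cross term, and the constant term) the left-hand side equals exactly $q^{d-1}|E| - \frac{|E|^2}{q}$; discarding the nonpositive last term gives the stated bound $\sum_h \left(i(h)-\frac{|E|}{q}\right)^2 \le q^{d-1}|E|$. Apart from the orthogonality count highlighted above, every step is algebraically routine and structurally identical to the planar lemma.
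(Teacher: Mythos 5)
Your proposal is correct and follows essentially the same route as the paper: expand the square, split off the diagonal, count $\frac{q^d-1}{q-1}$ hyperplanes through a point and $\frac{q^{d-1}-1}{q-1}$ through a distinct pair (via the admissible directions $e$ orthogonal to $v-v'$, with the offset $t$ forced), then run the variance expansion using the total hyperplane count. Your only cosmetic deviation is computing the variance exactly as $q^{d-1}|E| - \frac{|E|^2}{q}$ and discarding the nonpositive term, whereas the paper reaches the same bound by first replacing $\frac{q^d-1}{q(q-1)}$ with the smaller coefficient $\frac{q^{d-1}-1}{q-1}$; these are the same algebra in a different order.
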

\begin{proof}
We denote by $h$ the characteristic function of a hyperplane $h$.
\begin{align*}
\sum_h i(h)^2 
& = \sum_h \left( \sum_{v \in E} h(v) \right)^2 \\
& = \sum_h  \sum_{v,v' \in E} h(v) h(v') \\
& = \sum_{v \in E} \sum_h h(v) + \sum_{v\neq v' \in E} \sum_{h} h(v) h(v').
\end{align*}
There are $\frac{q^d-1}{q-1}$ hyperplanes incident to each $v \in E$ (precisely one for each ``direction $e$''). So the first summand equals $\frac{q^d-1}{q-1} |E|$.

For the second summand we count how many hyperplanes are incident to both $v$ and $v'$ for $v \neq v' \in E$. Each such hyperplane in characterised by a direction $e$ such that $e \cdot (v - v') = 0$. This is because if both $v$ and $v'$ belong to $\{ x : x \cdot e = t\}$, then $e \cdot (v-v') = t-t=0$; and conversely if $e \cdot (v - v')=0$, then $e \cdot v = e \cdot v'$ and therefore $v$ and $v'$ are incident to a hyperplane orthogonal to $e$. There are $\frac{q^{d-1}-1}{q-1}$ such $e$ corresponding to every ``direction'' $e$ in a subspace isomorphic to $\mathbb{F}_q^{d-1}$. So the second summand equals $\frac{q^{d-1}-1}{q-1} (|E|-1) |E|$.

This proves the first assertion. The second assertion follows straightforwardly.
\begin{align*}
\sum_h \left(i(h) - \frac{|E|}{q}\right)^2 
& = \sum_h i(h)^2 - 2 \frac{|E|}{q} \sum_h i(h) + \frac{q(q^{d}-1)}{q-1} \frac{|E|^2}{q^2} \\
& = \sum_h i(h)^2 - 2 \frac{|E|}{q} \frac{q^{d}-1}{q-1} |E| +  \frac{q^{d}-1}{q(q-1)} |E|^2\\
& = \sum_h i(h)^2 -   \frac{q^{d}-1}{q(q-1)} |E|^2\\
& \leq \sum_h i(h)^2 -   \frac{q^{d-1}-1}{q-1} |E|^2\\
& =  q^{d-1} |E|.
\end{align*}
\end{proof}

We also prove a higher dimensional analogue of Corollary~\ref{Good direction}. 
\begin{corollary} \label{Good direction HD}
Let $d \geq 2$, $z \in \mathbb{F}_q \setminus\{0\}$, $E \subseteq \mathbb{F}_q^d$, and $\Theta \subseteq \mathbb{F}_q^{d-1}$. There exists $\theta \in \Theta$ such that
\[
|E \cdot (\theta \times \{z\})|  \geq q \, \frac{|E| |\Theta|}{q^d + |E| |\Theta|}.
\]
Hence $|E| |\Theta| > q^d$ implies there exists $\theta \in \Theta$ such that $|E \cdot  (\theta \times \{z\}) | > q/2$.
\end{corollary}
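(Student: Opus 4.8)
The plan is to follow the proof of Corollary~\ref{Good direction} almost verbatim, replacing Lemma~\ref{var i} by its higher-dimensional analogue Lemma~\ref{var i HD}. For each $\theta \in \Theta$ I would set $v_\theta = \theta \times \{z\} = (\theta_1, \dots, \theta_{d-1}, z)$ and, for $t \in \mathbb{F}_q$, let $h_{\theta,t} = \{ x \in \mathbb{F}_q^d : x \cdot v_\theta = t\}$ be the hyperplane orthogonal to $v_\theta$ at level $t$. Since $z \neq 0$ each $v_\theta$ is nonzero, so for fixed $\theta$ the family $\{h_{\theta,t}\}_{t \in \mathbb{F}_q}$ partitions $\mathbb{F}_q^d$, and the projection $E \cdot v_\theta$ is exactly the set of levels $t$ for which $i(h_{\theta,t}) \neq 0$.

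The first thing to check --- and the one genuinely load-bearing point --- is that the $q|\Theta|$ hyperplanes $h_{\theta,t}$ are pairwise distinct, so that summing over them really extracts a subset of the terms appearing in Lemma~\ref{var i HD}. Two hyperplanes $\{x : x \cdot a = s\}$ and $\{x : x \cdot b = t\}$ coincide only when $(a,s)$ and $(b,t)$ are scalar multiples of one another. Comparing the last coordinates of $v_\theta$ and $v_{\theta'}$, both equal to $z \neq 0$, forces the scalar to be $1$ and hence $\theta = \theta'$; for fixed $\theta$ distinct levels give parallel distinct hyperplanes. This is precisely where the hypothesis $z \neq 0$ is used, and it is the only delicate step; everything else is the routine second-moment bookkeeping.

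Granting distinctness, the second estimate of Lemma~\ref{var i HD} gives
\[
\sum_{\theta \in \Theta} \sum_{t \in \mathbb{F}_q} \left( i(h_{\theta,t}) - \frac{|E|}{q} \right)^2 \leq \sum_h \left( i(h) - \frac{|E|}{q}\right)^2 \leq q^{d-1}|E|,
\]
so some $\theta \in \Theta$ satisfies $\sum_{t}(i(h_{\theta,t}) - |E|/q)^2 \leq q^{d-1}|E|/|\Theta|$. Expanding the square and using that $\{h_{\theta,t}\}_t$ partitions $\mathbb{F}_q^d$ (so that $\sum_t i(h_{\theta,t}) = |E|$ and only the $|E \cdot v_\theta|$ terms with $t \in E \cdot v_\theta$ are nonzero) turns this into $\sum_{t \in E \cdot v_\theta} i(h_{\theta,t})^2 \leq |E|^2/q + q^{d-1}|E|/|\Theta|$, exactly as in the case $d=2$.

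Finally I would apply the Cauchy--Schwarz inequality to bound the left-hand side below by $|E|^2 / |E \cdot v_\theta|$ and rearrange to obtain $|E \cdot v_\theta| \geq q|E||\Theta| / (q^d + |E||\Theta|)$. The second conclusion then follows because $x \mapsto qx/(q^d + x)$ is increasing, so the bound is minimised at $x = |E||\Theta|$; when $|E||\Theta| > q^d$ the denominator is less than $2|E||\Theta|$, giving $|E \cdot v_\theta| > q/2$. No step beyond the distinctness check presents any real difficulty, since Lemma~\ref{var i HD} was engineered to make this argument dimension-independent.
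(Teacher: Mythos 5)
Your proof is correct and follows essentially the same route as the paper's own proof: the same hyperplanes $h_{\theta,t}$, the same pigeonhole application of Lemma~\ref{var i HD}, the same expansion and Cauchy--Schwarz step, and the same monotonicity argument for the second conclusion. The only difference is that you spell out the distinctness of the hyperplanes (via the last coordinate $z \neq 0$), which the paper merely asserts; that is a welcome clarification, not a deviation.
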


\begin{proof}

For notational convenience, given $\theta \in \mathbb{F}_q^{d-1}$ and $z \in \mathbb{F}_q$, we write $(\theta,z)$ for the vector $ \theta  \times \{z\}$.

We once again show that Lemma~\ref{var i HD} implies that $E$ is equidistributed on hyperplanes orthogonal to some direction $(\theta,z)$ for some $\theta \in \Theta$ and then apply the Cauchy-Schwartz inequality. 

Keeping in mind that $z \in \mathbb{F}_q$ is fixed, we write $v_\theta = (\theta,z)$ for each $\theta \in \Theta$ and $h_{\theta,t}$ for the hyperplane  $\{ x \in \mathbb{F}_q^d : x \cdot v_\theta = t\}$. Note that the hyperplanes $h_{\theta,t}$ are distinct. The second statement in Lemma~\ref{var i HD} implies
\[
\sum_{\theta \in \Theta} \sum_{t \in \mathbb{F}_q} \left(i(h_{\theta,t}) - \frac{|E|}{q}\right)^2 \leq \sum_{h} \left(i(h) - \frac{|E|}{q}\right)^2 \leq q^{d-1} |E|.
\] 
Therefore there exists $\theta \in \Theta$ such that 
\[
\sum_{t \in \mathbb{F}_q} \left(i(h_{\theta,t}) - \frac{|E|}{q}\right)^2 \leq \frac{q^{d-1}|E|}{|\Theta|}.
\]
Expanding the square on the left side gives
\[
\sum_{t \in\mathbb{F}_q} i(h_{\theta,t})^2 - 2 \frac{|E|}{q} \sum_{t \in \mathbb{F}_q} i(h_{\theta,t}) + \frac{|E|^2}{q} = \sum_{t \in\mathbb{F}_q} i(h_{\theta,t})^2 - 2 \frac{|E|^2}{q} + \frac{|E|^2}{q} = \sum_{t \in E \cdot v_\theta} i(h_{\theta,t})^2 - \frac{|E|^2}{q}.
\]
We used the fact that the collection of hyperplanes $\{ h_{\theta,t}\}_{t \in \mathbb{F}_q}$ partitions $\mathbb{F}_q^d$ and that $i(h_{\theta,t}) = 0$ when $t \notin E \cdot v_\theta$. Substituting above gives
\[
\sum_{t \in E \cdot v_\theta} i(h_{\theta,t})^2 \leq \frac{|E|^2}{q} + \frac{q^{d-1}|E|}{|\Theta|}.
\]
By the Cauchy-Schwarz inequality the left side is bounded below by
\[
\frac{\left( \sum_{t \in E \cdot v_\theta} i(h_{\theta,t}) \right)^2}{|E \cdot v_\theta|} = \frac{|E|^2}{|E \cdot v_\theta|}.
\]
Rearranging gives $\displaystyle |E \cdot v_\theta| \geq \frac{q |E| |\Theta|}{q^d + |E| |\Theta|}.$ 

The second conclusion follows from the fact that the above lower bound for $|E \cdot v_\theta|$ is an increasing function on the quantity $|E| |\Theta|$.
\end{proof}

\section{A good direction to project onto}
\label{Maarstrand}

The first application of Lemma~\ref{var i} is the following result of Iosevich.
\begin{theorem}[Iosevich]
Let $E \subseteq \mathbb{F}_q^2$. There exists $\xi \in \mathbb{F}_q$ such that
\[
|E \cdot (1,\xi)| \geq \frac{1}{2} \min\{|E|, q\}.
\]
\end{theorem}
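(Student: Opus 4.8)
The plan is to apply Corollary~\ref{Good direction} in the simplest possible way, taking the direction set to be all of the field. Since the theorem allows $\xi \in \mathbb{F}_q$ with no restriction (in particular $\xi = 0$ is permitted), there is no reason to exclude any direction, and I would choose $\Theta = \mathbb{F}_q$, so that $|\Theta| = q$. With this choice Corollary~\ref{Good direction} produces a $\theta \in \mathbb{F}_q$ for which
\[
|E \cdot (1, \theta)| \geq q \, \frac{|E| \cdot q}{q^2 + |E| q} = \frac{q|E|}{q + |E|}.
\]
Setting $\xi = \theta$, it then remains only to verify the elementary inequality $\frac{q|E|}{q+|E|} \geq \frac{1}{2}\min\{|E|,q\}$.

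The cleanest way to dispatch this is to observe that $\frac{q|E|}{q+|E|}$ is precisely one half of the harmonic mean of the two positive quantities $q$ and $|E|$, since the harmonic mean equals $\frac{2q|E|}{q+|E|}$. As the harmonic mean of two positive reals always lies between their minimum and their maximum, we obtain $\frac{q|E|}{q+|E|} \geq \frac{1}{2}\min\{q, |E|\}$, which is exactly the desired bound. Alternatively, one can argue by the two cases $|E| \leq q$ and $|E| > q$: in the first case the inequality $\frac{q|E|}{q+|E|} \geq \frac{|E|}{2}$ reduces to $q \geq |E|$, and in the second $\frac{q|E|}{q+|E|} \geq \frac{q}{2}$ reduces to $|E| \geq q$, both of which hold by the hypothesis defining the case.

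I do not expect a genuine obstacle here: essentially all of the work is carried out by Lemma~\ref{var i} and its Corollary~\ref{Good direction}, and this theorem is the specialisation $\Theta = \mathbb{F}_q$ followed by a routine estimate. The only point warranting a moment's care is the choice of $\Theta$. Using the full field $\mathbb{F}_q$ is what makes the bound tight enough; had one insisted on $\xi \neq 0$ and taken $\Theta = \mathbb{F}_q^*$ with $|\Theta| = q - 1$, the resulting lower bound $\frac{q(q-1)|E|}{q^2 + (q-1)|E|}$ would fail to reach $\frac12\min\{|E|,q\}$ when $|E|$ is close to $q$. Since the statement as given places no restriction on $\xi$, this subtlety does not arise, and the argument above suffices.
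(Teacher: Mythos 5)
Your proposal is correct and follows exactly the paper's own argument: apply Corollary~\ref{Good direction} with $\Theta = \mathbb{F}_q$ to obtain the lower bound $\frac{q|E|}{q+|E|}$, then verify it exceeds $\frac{1}{2}\min\{|E|,q\}$ by the two-case (or equivalently harmonic-mean) estimate. Nothing to add.
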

\begin{proof}
Let $\Theta = \mathbb{F}_q$ in Corollary~\ref{Good direction} and deduce the existence of $\xi \in \mathbb{F}_q$ such that
\[
|E \cdot (1, \xi)| \geq \frac{q|E|}{q + |E|}.
\]
When $|E| \geq q$, the right side is at least $q/2$; and when $|E| \leq q$, the right side is at least $|E|/2$.
\end{proof}

\section{Vinh's point-line incidence theorem}
\label{Vinh}

The second application of Lemma~\ref{var i} is the following elementary proof of a theorem of Vinh. Vinh's elegant proof in \cite{Vinh2011} is based on spectral properties of regular graphs. Cilleruelo has provided an elementary proof based on Sidon sets in \cite{Cilleruelo2012}.
\begin{theorem}[Vinh]
\label{thm:2}
Let $E \subseteq \mathbb{F}_q^2$ and $\mathcal{L}$ be a collection of lines in $\mathbb{F}_q^2$. The number
\[
I(E,\mathcal{L}) = \sum_{\ell \in \mathcal{L}} i(\ell)
\] 
of point-line incidences satisfies
\[
\left| I(E, \mathcal{L}) - \frac{|E||\mathcal{L}|}{q} \right| \leq \sqrt{q |E| |\mathcal{L}|}.
\] 
Hence there is an incidence when $|E| |\mathcal{L}| > q^3$.
\end{theorem}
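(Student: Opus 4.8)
The plan is to recognise $I(E,\mathcal{L})$ as a partial sum of the incidence function, subtract the ``expected'' contribution $|E|/q$ from each line, and then apply the Cauchy--Schwarz inequality to reduce everything to the variance bound already established in Lemma~\ref{var i}. Since each of the $|\mathcal{L}|$ lines contributes $|E|/q$ to the main term, I would first rewrite the discrepancy as a sum of centred terms:
\[
I(E,\mathcal{L}) - \frac{|E||\mathcal{L}|}{q} = \sum_{\ell \in \mathcal{L}} \left( i(\ell) - \frac{|E|}{q} \right).
\]

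Next I would apply Cauchy--Schwarz to the right-hand side, pairing the constant function $1$ with the centred incidence function:
\[
\left| \sum_{\ell \in \mathcal{L}} \left( i(\ell) - \frac{|E|}{q} \right) \right| \leq \sqrt{|\mathcal{L}|} \, \sqrt{\sum_{\ell \in \mathcal{L}} \left( i(\ell) - \frac{|E|}{q} \right)^2}.
\]
The crucial observation is that the inner sum, taken only over $\ell \in \mathcal{L}$, is bounded by the same sum taken over \emph{all}\/ lines in $\mathbb{F}_q^2$, simply because every summand is non-negative. The second conclusion of Lemma~\ref{var i} then controls this full sum by $q|E|$, giving
\[
\left| I(E,\mathcal{L}) - \frac{|E||\mathcal{L}|}{q} \right| \leq \sqrt{|\mathcal{L}|} \cdot \sqrt{q|E|} = \sqrt{q|E||\mathcal{L}|},
\]
which is precisely the claimed inequality.

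For the final assertion about guaranteed incidences, I would argue by contradiction: if $I(E,\mathcal{L}) = 0$, then the bound just derived forces $|E||\mathcal{L}|/q \leq \sqrt{q|E||\mathcal{L}|}$, and squaring yields $|E||\mathcal{L}| \leq q^3$. Taking the contrapositive, $|E||\mathcal{L}| > q^3$ guarantees at least one point-line incidence.

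I do not anticipate a genuine obstacle here: all of the analytic content has been absorbed into Lemma~\ref{var i}, and what remains is a single application of Cauchy--Schwarz together with the trivial monotonicity of a sum of squares under enlarging its index set. The only point deserving minor care is verifying that the centring constant $|E|/q$ attached to each line matches the main term $|E||\mathcal{L}|/q$ exactly, so that the two contributions combine cleanly into the centred sum before Cauchy--Schwarz is invoked.
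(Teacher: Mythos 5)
Your proposal is correct and follows essentially the same route as the paper: centre the incidences at $|E|/q$, apply Cauchy--Schwarz, enlarge the sum of squares to all lines, and invoke the second conclusion of Lemma~\ref{var i}. The only cosmetic difference is that the paper inserts a triangle-inequality step before Cauchy--Schwarz, whereas you apply Cauchy--Schwarz directly to the signed sum; your explicit contrapositive argument for the incidence threshold $|E||\mathcal{L}| > q^3$ is also fine.
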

\begin{proof}
We combine the triangle and  Cauchy-Schwartz inequalities with the second statement in Lemma~\ref{var i}.
\begin{align*}
\left| I(E,\mathcal{L}) - \frac{|E| ||\mathcal{L}|}{q} \right| 
&= \left| \sum_{\ell \in \mathcal{L}} \left(i(\ell) - \frac{|E|}{q} \right)\right| \\
& \leq \sum_{\ell \in \mathcal{L}} \left|i(\ell) - \frac{|E|}{q} \right| \\
& \leq \sqrt{|\mathcal{L}| \sum_{\ell \in \mathcal{L}} \left(i(\ell) - \frac{|E|}{q}\right)^2 } \\
& \leq \sqrt{|\mathcal{L}| \sum_{\ell} \left(i(\ell) - \frac{|E|}{q}\right)^2 } \\
& \leq \sqrt{q |\mathcal{L}| |E|}.
\end{align*}
\end{proof}
In the $|E| |\mathcal{L}| > 2q^3$ range, Vinh's result is stronger than the point-line incidence theorem of Bourgain, Katz, and Tao in \cite{BKT2004} despite having a proof that is similar to that of the first result in the paper. In this range, Vinh's result asserts that the number of incidences is close to the case where $E$ and $\mathcal{L}$ are ``random like''.

The second conclusion of Lemma~\ref{var i HD} implies the following bound on the number of incidences $I(E,\mathcal{H})$ between a point set $E \subseteq \mathbb{F}_q^d$ and a collection of hyperplanes $\mathcal{H}$ in $\mathbb{F}_q^d$, which is also due to Vinh:
\[
\left |I(E, \mathcal{H}) - \frac{|E| |\mathcal{H}|}{q} \right| \leq \sqrt{q^{d-1} |E| |\mathcal{H}|}.
\]

\section{Pinned dot products}
\label{Pinned dot prod}

Let $E \subseteq \mathbb{F}_q^2$. Hart and Iosevich in \cite{Hart-Iosevich2008} found lower bounds on $|E|$, which guarantee that 
\[
E \cdot E =\{ u \cdot v : u,v \in E\},
\]
the set of dot products determined by $E$, is ``large''.
\begin{theorem}[Hart and Iosevich] \label{HI}
Let $E \subseteq \mathbb{F}_q^2$ and $M$ be the maximum number of points of $E$ contained in a line through the origin. 
\begin{enumerate}
\item[(i)] $E \cdot E \supseteq \mathbb{F}_q\setminus\{0\}$ provided that $|E| > q^{3/2}$.
\item[(ii)] $|E \cdot E| > q/2$ provided that $|E| > q M^{1/2}$.
\end{enumerate}
\end{theorem}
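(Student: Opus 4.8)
The plan is to realise the dot-product multiplicity function as a sum of incidences and then run the same second-moment argument that drives Lemma~\ref{var i} and Corollary~\ref{Good direction}. We may assume $0 \notin E$, since the origin only ever contributes to $N(0)$ below. For $t \in \mathbb{F}_q$ write
\[
N(t) = |\{(u,v) \in E \times E : u \cdot v = t\}|.
\]
For a fixed $v \neq 0$ the equation $u \cdot v = t$ cuts out the line $\ell_{v,t} = \{x \in \mathbb{F}_q^2 : x \cdot v = t\}$, so $N(t) = \sum_{v \in E} i(\ell_{v,t})$. Since for each fixed $v$ the lines $\{\ell_{v,t}\}_{t \in \mathbb{F}_q}$ partition the plane, $\sum_t i(\ell_{v,t}) = |E|$, and hence the mean value of $N$ is $|E|^2/q$. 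The entire task is to control the deviation of $N(t)$ from this mean using the variance bound $\sum_\ell (i(\ell)-|E|/q)^2 \le q|E|$ of Lemma~\ref{var i}.

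For part (i) I would fix $t \neq 0$ and exploit one small but crucial observation: when $t \neq 0$ the $|E|$ lines $\{\ell_{v,t}\}_{v \in E}$ are pairwise distinct. Indeed $\ell_{\lambda v, t} = \ell_{v, t/\lambda}$, which can equal $\ell_{v,t}$ only if $\lambda = 1$ once $t \neq 0$, while non-parallel normals plainly give distinct lines. Writing $N(t) - |E|^2/q = \sum_{v \in E}\bigl(i(\ell_{v,t}) - |E|/q\bigr)$ and applying Cauchy--Schwarz, distinctness lets me dominate $\sum_{v \in E}\bigl(i(\ell_{v,t}) - |E|/q\bigr)^2$ by the full sum over all lines, which Lemma~\ref{var i} caps at $q|E|$. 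This gives $\bigl(N(t) - |E|^2/q\bigr)^2 \le q|E|^2$, so $N(t) > 0$ as soon as $|E|^2/q > \sqrt{q}\,|E|$, i.e. $|E| > q^{3/2}$.

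For part (ii) the lines $\ell_{v,t}$ coincide across different $t$, so instead of arguing one $t$ at a time I would sum the per-$t$ Cauchy--Schwarz bound over all $t$ and swap the order of summation, reducing matters to $\sum_{v \in E}\sum_{t}\bigl(i(\ell_{v,t}) - |E|/q\bigr)^2$. This is exactly where $M$ enters. Grouping the points $v \in E$ by the line through the origin they determine, all $v$ of a common direction $d$ share the same parallel class $\mathcal{P}(d)$ of orthogonal lines, so each class sum $S(d) = \sum_{\ell \in \mathcal{P}(d)}(i(\ell)-|E|/q)^2$ gets counted with multiplicity $m(d) \le M$. As the parallel classes are disjoint and exhaust all lines, $\sum_d S(d) \le q|E|$, whence $\sum_{v \in E}\sum_t(\cdots)^2 = \sum_d m(d)S(d) \le Mq|E|$ and therefore $\sum_t\bigl(N(t) - |E|^2/q\bigr)^2 \le qM|E|^2$. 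A final Cauchy--Schwarz over the support of $N$, namely $|E|^4 = \bigl(\sum_{t \in E\cdot E} N(t)\bigr)^2 \le |E \cdot E|\sum_t N(t)^2$ together with $\sum_t N(t)^2 \le qM|E|^2 + |E|^4/q$, yields $|E \cdot E| \ge q|E|^2/(q^2 M + |E|^2)$, which exceeds $q/2$ precisely when $|E| > qM^{1/2}$.

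The main obstacle is the bookkeeping of which lines coincide, and recognising that this is precisely what $M$ measures. In part (i) the payoff is the distinctness at $t \neq 0$, which removes any dependence on $M$; in part (ii) the coincidences are genuine and the multiplicity of each direction, bounded by $M$, is exactly the quantity the hypothesis controls. A secondary, purely routine matter is disposing of the origin and checking that the variance estimates convert cleanly into the stated thresholds $q^{3/2}$ and $qM^{1/2}$.
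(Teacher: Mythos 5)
Your proof is correct, but only part (i) follows the paper's route; part (ii) is genuinely different. For (i) you do what the paper does: it notes that for fixed $\xi \neq 0$ the lines $\{\ell_{e,\xi}\}_{e \in E}$ are $|E|$ distinct lines and cites Vinh's bound (Theorem~\ref{thm:2}), whose proof is exactly the triangle-inequality-plus-Cauchy--Schwarz reduction to Lemma~\ref{var i} that you carried out inline. For (ii), the paper deduces the claim from a \emph{pinned} statement (Theorem~\ref{Pinned dot}): it applies Corollary~\ref{Good direction} with $\Theta = \mathrm{Dir}(E)$ to produce a single direction $(1,\theta) = \lambda e$, $e \in E$, $\lambda \neq 0$, with $|E \cdot e| = |E \cdot (1,\theta)| > q/2$, and the parameter $M$ enters only through the pigeonhole bound $|\mathrm{Dir}(E)| \geq |E|/M$. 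You instead bound the second moment of the global representation function $N(t) = \sum_{v \in E} i(\ell_{v,t})$, with $M$ entering as the maximal multiplicity with which a parallel class of lines $\{\ell_{v,t}\}_t$ is repeated as $v$ ranges over $E$, and you finish with Cauchy--Schwarz over the support of $N$. The trade-offs: the paper's argument yields the stronger pinned conclusion (one $e \in E$ with $|E \cdot e| > q/2$) under the more flexible hypothesis $|E|\,|\mathrm{Dir}(E)| > q^2$, whereas yours yields only the unpinned $|E \cdot E| > q/2$ but produces the explicit all-range energy bound $|E \cdot E| \geq q|E|^2/(q^2 M + |E|^2)$ and avoids the direction-set bookkeeping entirely; both arguments run on the same variance bound from Lemma~\ref{var i}. (You gloss the removal of the origin; the paper likewise glosses the origin and the vertical direction absent from $\mathrm{Dir}(E)$ in the bound $|\mathrm{Dir}(E)| \geq |E|/M$ --- both are routine to repair and neither is a substantive gap.)
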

In \cite{CEHIK2012}, Chapman, Erdo{\u{g}}an, Hart, Iosevich, and Koh proved a pinned version of the above theorem by determining a condition on $E$ so that there exists $e \in E$ such that $|E \cdot e| > q/2$.  

Both sets of authors used Fourier analysis on $\mathbb{F}_q^2$ motivated by analogous results in the Euclidean setting. We explore the natural connection with point-line incidence results.

The key observation is that $\xi \in E \cdot E$ precisely when $E$ is incident to one of the lines $\ell_{e,\xi} = \{ v \in \mathbb{F}_q : v \cdot e = \xi\}$ for $e \in E$. For a fixed $\xi \neq 0$, there are $|E|$ such lines and so the first part of the above theorem follows by Vinh's point-line incidence theorem (for each $\xi \neq 0$ there is an incidence between $E$ and $\{\ell_{e,\xi}\}_{e \in E}$ when $|E|^2 > q^3$).

A pinned version of Part (ii) can be proved (in a slightly stronger form) using Lemma~\ref{var i}. Let us introduce some terminology first.

A direction $\theta \in \mathbb{F}_q$ is determined by a set $E \subseteq \mathbb{F}_q$ if the vector $(1,\theta)$ is incident to the same line through the origin as some element of $E$. In other words, $E$ determines a direction $\theta$ if $(\lambda,\lambda \theta) \in E$ for some $0 \neq \lambda \in \mathbb{F}_q$. The \emph{direction set of E}, denoted by $\mathrm{Dir}(E)$, is the set of directions determined by $E$. 
\begin{theorem}\label{Pinned dot}
Let $E \subseteq \mathbb{F}_q^2$. Suppose that $|E| |\mathrm{Dir}(E)| > q^2$.There exists $e \in E$ such that 
\[
 |E \cdot e| > \frac{q}{2}
\]
\end{theorem}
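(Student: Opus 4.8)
The plan is to reduce the statement to the ``good direction'' Corollary~\ref{Good direction} by recognising each pinned dot product set $E \cdot e$ as a projection of $E$, and by exploiting that the set of admissible pinning points $e \in E$ is, up to scaling, exactly the direction set $\mathrm{Dir}(E)$.

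First I would record the two elementary observations that make the reduction work. For any nonzero $e = (e_1,e_2)$, the pinned set $E \cdot e = \{u \cdot e : u \in E\}$ is precisely the projection of $E$ onto the direction $e$, in the same sense as the projections $E \cdot (1,\theta)$ appearing in Corollary~\ref{Good direction}. Moreover, for any $0 \neq \lambda \in \mathbb{F}_q$ one has $E \cdot (\lambda e) = \lambda (E \cdot e)$, and since multiplication by $\lambda$ is a bijection of $\mathbb{F}_q$ we get $|E \cdot (\lambda e)| = |E \cdot e|$. In particular, the cardinality of a pinned dot product set depends only on the direction (the line through the origin) determined by $e$, not on $e$ itself.

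Next I would apply Corollary~\ref{Good direction} with $\Theta = \mathrm{Dir}(E)$. Since $|E|\,|\mathrm{Dir}(E)| > q^2$ by hypothesis, the second conclusion of the corollary produces a slope $\theta \in \mathrm{Dir}(E)$ with $|E \cdot (1,\theta)| > q/2$. By the definition of the direction set, $\theta \in \mathrm{Dir}(E)$ means there is some $0 \neq \lambda \in \mathbb{F}_q$ with $e := (\lambda, \lambda\theta) = \lambda(1,\theta) \in E$; this $e$ is a genuine nonzero point of $E$ lying on the line through the origin of slope $\theta$. Combining the two steps via the scaling observation gives $|E \cdot e| = |E \cdot (\lambda(1,\theta))| = |E \cdot (1,\theta)| > q/2$, which is the desired conclusion.

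The only point requiring care — and the place where a careless argument would break — is ensuring that the ``good'' direction delivered by the corollary is actually realised by a point of $E$, rather than being an arbitrary slope in $\mathbb{F}_q$ that $E$ never meets. This is precisely why one must feed $\Theta = \mathrm{Dir}(E)$ (and not all of $\mathbb{F}_q$) into Corollary~\ref{Good direction}: the restriction to determined directions is exactly what converts a ``projection is large'' statement into a ``\emph{pinned} dot product is large'' statement. No analytic obstacle remains beyond this bookkeeping, since the substantive second-moment estimate has already been carried out in Lemma~\ref{var i} and Corollary~\ref{Good direction}.
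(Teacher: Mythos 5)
Your proposal is correct and follows essentially the same route as the paper: apply Corollary~\ref{Good direction} with $\Theta = \mathrm{Dir}(E)$, then use the definition of $\mathrm{Dir}(E)$ and the scaling invariance $|E \cdot (\lambda e)| = |E \cdot e|$ (for $\lambda \neq 0$) to convert the good slope $(1,\theta)$ into a genuine pinning point $e \in E$. The paper's proof is just a terser version of the same argument, so there is nothing to add.
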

\begin{proof}
Apply Corollary~\ref{Good direction} to $E$ and $\Theta = \mathrm{Dir}(E)$. There exists $\theta \in \mathrm{Dir}(E)$ such that
\[
|E \cdot (1, \theta)| > \frac{q}{2}.
\]
As $(1, \theta) = \lambda e$ for some $e \in E$ and $0 \neq \lambda$, we get $|E \cdot e | = |E \cdot (1, \theta)| > q/2$.
\end{proof}
Note that $|\mathrm{Dir}(E)| \geq |E| /M$ and so the second part of Theorem~\ref{HI} follows.\\

A case of particular interest is $E=A \times A$ for $A \subseteq \mathbb{F}_q$. In this setting 
\[
E \cdot E = AA+AA = \{ab+cd : a,b,c,d \in A\}.
\]
Theorem~\ref{HI} implies
\begin{enumerate}
\item[(i$)^\prime$] $AA+AA \supseteq \mathbb{F}_q\setminus\{0\}$ provided that $|A| > q^{3/4}$.
\item[(ii$)^\prime$] $|AA+AA| > q/2$ provided that $|A| > q^{2/3}$ (because $M \leq |A|$).
\end{enumerate}
Both statements have not been improved since 2008. 

Statement (i$)^\prime$ seems more suitable for analytical tools. For example, using multiplicative characters in $\mathbb{F}_q\setminus\{0\}$ and the classical bounds on Jacobi sums yields the same bound $|A| > q^{3/4}$. Note that taking $q$ to be a prime congruent to 3 modulo 4 and $A$ to be the set of non-zero quadratic residues shows that 0 need not be in $AA+AA$ unless $|A|>q/2$.  

Statement (ii$)^\prime$, which is the more combinatorial of the two, has been proved in an altogether different way in fields of prime characteristic by Rudnev in \cite{Rudnev}. Using a point-plane incidence theorem in $\mathbb{F}_q^3$ similar to a line-line incidence theorem of Guth and Katz in $\mathbb{R}^2$ from \cite{Guth-Katz2015}, Rudnev established that
\[
|AA+AA| = \Omega(\min\{q, |A|^{3/2}\}) \text{ (for prime $q$)}.
\]
Rudnev's lower bound is better to that implicitly given by Hart and Iosevich for ``small sets'' that satisfy $|A| = O(q^{2/3})$. There is rich literature on the subject both for ``large sets'' and for ``small sets'' with multiplicative subgroups being a special case of particular importance (\cite{Garcia-Voloch1988,Heath-Brown-Konyagin2000,Hart-Iosevich2008,Glibichuck-Konyagin2007, Shparlinski2008,HIKR2011,Shkredov-Vyugin2012}). 

The standard in the literature is the following result of Roche-Newton, Rudnev, and Shkredov from \cite{RRS}, which also depends on ideas first developed by Guth and Katz. We state it in a simplified form that is adequate for the dot products question. 
\begin{theorem}[Roche-Newton, Rudnev, and Shkredov]\label{RRS}
Let $p$ be an odd prime and $A, \Theta \subseteq \mathbb{F}_p$. Suppose that $|A| \leq |\Theta| \leq |A|^2$. The following inequality holds.
\[
|A +\Theta A| = \Omega(\min\{p, |A| \sqrt{|\Theta|}\}).
\]
Hence by setting $\Theta = a^{-1}A$ for any $0 \neq a \in A$ gives
\[
|aA+AA| = \Omega(\min\{p, |A| \sqrt{|\Theta|}\}).
\]
\end{theorem}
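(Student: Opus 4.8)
The plan is to split the statement into two parts of very different character: the inequality $|A + \Theta A| = \Omega(\min\{p,|A|\sqrt{|\Theta|}\})$, which is the genuinely hard content and is the reason the result is attributed to \cite{RRS}, and the one-line passage to the bound for $|aA+AA|$, which is purely formal.

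For the main inequality I would first reduce the lower bound on a sumset to an upper bound on an \emph{energy} via Cauchy--Schwarz. Writing $r(x)$ for the number of triples $(a,b,\theta)\in A\times A\times\Theta$ with $a+\theta b = x$, so that $\sum_x r(x) = |A|^2|\Theta|$ and $r$ is supported on $A+\Theta A$, Cauchy--Schwarz gives
\[
|A+\Theta A| \;\geq\; \frac{\bigl(|A|^2|\Theta|\bigr)^2}{E}, \qquad E := \sum_x r(x)^2 .
\]
Here $E$ counts solutions of $a_1+\theta_1 b_1 = a_2+\theta_2 b_2$ with the $a,b$ ranging over $A$ and the $\theta$ over $\Theta$. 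It therefore suffices to establish the energy estimate
\[
E \;\ll\; \frac{|A|^4|\Theta|^2}{p} + |A|^3|\Theta|^{3/2},
\]
since substituting this into the previous display and using $|A|\le|\Theta|\le|A|^2$ reproduces $\min\{p,|A|\sqrt{|\Theta|}\}$: the quotient of the two terms is $|A|\sqrt{|\Theta|}/p$, so the first term dominates exactly when $|A|\sqrt{|\Theta|}\gtrsim p$, giving $|A+\Theta A|\gg p$, while otherwise the second term gives $|A+\Theta A|\gg|A|\sqrt{|\Theta|}$.

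The heart of the matter is the energy estimate, and here I would pass to a point--plane incidence problem in $\mathbb{F}_p^3$. Rearranging the collision equation as $-\theta_1 b_1 + \theta_2 b_2 + (a_2-a_1) = 0$ realises $E$ as a weighted count of incidences between the points $(\theta_1,\theta_2,a_2-a_1)$, with $\theta_1,\theta_2\in\Theta$ and the third coordinate carrying the multiplicity of $a_2-a_1\in A-A$, and the $|A|^2$ planes $\{(X,Y,Z): -b_1X+b_2Y+Z=0\}$ indexed by $(b_1,b_2)\in A\times A$. Feeding this configuration into Rudnev's point--plane incidence theorem yields the displayed bound for $E$. \textbf{This incidence bound is the main obstacle}: it rests on the Guth--Katz polynomial method and, crucially, on controlling the number of collinear points in the configuration, which is the delicate non-degeneracy input; the hypothesis $|A|\le|\Theta|\le|A|^2$ is what keeps the resulting bound in its useful range. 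This is why the inequality is imported from \cite{RRS} rather than derived from the elementary Lemma~\ref{var i}.

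The final ``hence'' is then immediate. Taking $\Theta = a^{-1}A$ for a fixed $0\neq a\in A$, multiplication by $a^{-1}$ is a bijection of $\mathbb{F}_p$, so $|\Theta| = |A|$; in particular the hypothesis $|A|\le|\Theta|\le|A|^2$ holds and $|A|\sqrt{|\Theta|} = |A|^{3/2}$. Since $\Theta A = a^{-1}AA$, we have $a(A+\Theta A) = aA + AA$, and as $x\mapsto ax$ is a bijection of $\mathbb{F}_p$,
\[
|aA+AA| = |A+\Theta A| = \Omega\bigl(\min\{p,|A|^{3/2}\}\bigr),
\]
which is the asserted bound.
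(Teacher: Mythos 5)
The paper never proves Theorem~\ref{RRS}: it is quoted from \cite{RRS} precisely because it lies beyond the elementary second-moment method of Lemma~\ref{var i}, so there is no proof in the paper to compare yours against. Judged on its own terms, your proposal is sound and correctly apportioned. The final ``hence'' step --- the only part one could reasonably be expected to verify in full --- you prove completely and correctly: with $\Theta=a^{-1}A$ one has $|\Theta|=|A|$, the hypothesis $|A|\le|\Theta|\le|A|^2$ holds, and since $x\mapsto ax$ is a bijection of $\mathbb{F}_p$, $|aA+AA|=|a(A+a^{-1}AA)|=|A+\Theta A|=\Omega(\min\{p,|A|^{3/2}\})$; your computation also makes explicit that the $|A|\sqrt{|\Theta|}$ appearing in the paper's second display just means $|A|^{3/2}$. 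Your sketch of the main inequality follows the actual route of \cite{RRS}: Cauchy--Schwarz reduces the sumset bound to the energy $E$ counting collisions $a_1+\theta_1b_1=a_2+\theta_2b_2$, the estimate $E\ll |A|^4|\Theta|^2/p+|A|^3|\Theta|^{3/2}$ is exactly what Rudnev's point--plane incidence theorem delivers, and your derivation of $\Omega(\min\{p,|A|\sqrt{|\Theta|}\})$ from that estimate is correct. One technical refinement: Rudnev's theorem is stated for unweighted point sets, so rather than weighting the points $(\theta_1,\theta_2,a_2-a_1)$ by $r_{A-A}$, the cleaner formulation takes the unweighted families of points $(b_1,\theta_2,a_1)\in A\times\Theta\times A$ and planes $\theta_1X-b_2Y+Z=a_2$ indexed by $(\theta_1,b_2,a_2)\in\Theta\times A\times A$; both families have size $|A|^2|\Theta|$, the maximum number of collinear points is at most $\max\{|A|,|\Theta|\}$, and the hypothesis $|\Theta|\le|A|^2$ is exactly what absorbs the collinearity term $\max\{|A|,|\Theta|\}\cdot|A|^2|\Theta|$ into $|A|^3|\Theta|^{3/2}$ --- precisely the role you assigned to it. In short: your proposal is correct modulo the imported incidence theorem, and flagging that import as the irreducible obstacle matches the paper's own decision to cite the result rather than prove it.
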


Corollary~\ref{Good direction} offers a different proof of a pinned version of Theorem~\ref{RRS} for ``large sets'' in any finite field. 
\begin{theorem}
Let $A, \Theta \subseteq \mathbb{F}_q$. Suppose that $|A|^2 |\Theta| > q^2$. There exists $\theta \in \Theta$ such that
\[
|A+\theta A| > \frac{q}{2}.
\]
Hence:
\begin{enumerate}
\item[(i)] There exists $a\in A$ such that $|aA + A| > q/2$ provided that $|A|> q^{2/3}$.
\item[(ii)] There exist $a,b \in A$ such that $|aA + bA| > q/2$ provided that $|A|^2 |AA^{-1}| > q^2$.
\end{enumerate}
\end{theorem}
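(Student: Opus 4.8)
The plan is to apply Corollary~\ref{Good direction} directly, interpreting the set $A + \theta A$ as a projection of a Cartesian product. First I would set $E = A \times A \subseteq \mathbb{F}_q^2$ and observe that, for a fixed $\theta \in \mathbb{F}_q$, the projection $E \cdot (1, \theta)$ is exactly the set $\{ (a, b) \cdot (1, \theta) : a, b \in A \} = \{ a + \theta b : a, b \in A \} = A + \theta A$. Thus $|A + \theta A| = |E \cdot (1, \theta)|$, and the main claim reduces to producing a direction $\theta \in \Theta$ for which this projection exceeds $q/2$.

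Next I would feed $E = A \times A$ and $\Theta$ into Corollary~\ref{Good direction}. Since $|E| = |A|^2$, the hypothesis $|A|^2 |\Theta| > q^2$ is exactly the condition $|E| |\Theta| > q^2$ required by the second conclusion of the corollary. The corollary then yields some $\theta \in \Theta$ with $|E \cdot (1, \theta)| > q/2$, which by the identification above gives $|A + \theta A| > q/2$. This establishes the displayed main inequality of the theorem.

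For the two numbered consequences I would specialise the choice of $\Theta$. For part (i), take $\Theta = A$ itself; then $|A|^2 |\Theta| = |A|^3 > q^2$ is guaranteed by $|A| > q^{2/3}$, so the main statement produces $\theta \in A$ with $|A + \theta A| > q/2$. Writing $a = \theta \in A$ and relabelling gives $|aA + A| > q/2$ after multiplying through — here one must be mildly careful that $A + \theta A$ and $\theta A + A$ denote the same set, which is immediate. For part (ii), take $\Theta = AA^{-1} = \{ b a^{-1} : a, b \in A \}$; the hypothesis $|A|^2 |AA^{-1}| > q^2$ is again precisely $|E| |\Theta| > q^2$, so the main statement gives $\theta = b a^{-1} \in AA^{-1}$ with $|A + \theta A| > q/2$. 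Multiplying the set $A + (ba^{-1}) A$ by the nonzero scalar $a$ preserves cardinality and turns it into $aA + bA$, yielding $|aA + bA| > q/2$.

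The main obstacle is essentially bookkeeping rather than any genuine difficulty: the substance of the theorem is already carried by Corollary~\ref{Good direction}, so the only points requiring care are the two scalar-rescaling steps in parts (i) and (ii). Specifically, one should check that $\theta \ne 0$ is not needed for the projection interpretation but may matter when rewriting $\theta A$ multiplicatively, and in part (ii) that scaling by $a \in A \setminus \{0\}$ (which is legitimate since $a$ is a denominator in $\theta = ba^{-1}$, hence nonzero) does not change the cardinality of the sumset. These are routine once the projection identity $|A + \theta A| = |(A \times A) \cdot (1, \theta)|$ is in hand.
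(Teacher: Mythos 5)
Your proposal is correct and follows the paper's proof exactly: the paper likewise applies Corollary~\ref{Good direction} to $E = A\times A$ with the given $\Theta$, then specialises $\Theta = A$ for part (i) and $\Theta = AA^{-1}$ for part (ii). Your additional care with the rescaling steps (commutativity in (i), multiplying by the nonzero scalar $a$ in (ii)) just makes explicit what the paper leaves implicit.
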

\begin{proof}
Apply Corollary~\ref{Good direction} to $E = A \times A$ and $\Theta$. For the special cases.
\begin{enumerate}
\item[(i)] Let $\Theta = A$.
\item[(ii)] Let $\Theta = AA^{-1}$ (or apply Theorem~\ref{Pinned dot} to $E=A\times A$ observing that $\mathrm{Dir}(E) = AA^{-1}$).
\end{enumerate}
\end{proof}
Note that for prime order fields the above theorem is much weaker than Theorem~\ref{RRS}.

A different way to bound $|AA+AA|$ is to observe that $A(A+A) \subseteq AA+AA$. Unlike $AA+AA$, which is associated with dot products in $\mathbb{F}_q^2$, the set $A(A+A)$ does not appear to be connected to lines in $\mathbb{F}_q^2$. As we describe below, passing to a ``pinned subset '' of the form $A(a+A)$ for some $a\in A$ allows us to apply Lemma~\ref{var i}.
\begin{theorem} \label{A(a+A)}
Let $A, \Theta \subseteq \mathbb{F}_q$. Suppose that $0 \notin A$ and that $|A| > q^{2/3}$. There exists $a \in A$ such that
\[
|A(a+ A)| > \frac{q}{2}.
\]
\end{theorem}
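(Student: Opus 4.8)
The plan is to realize the apparently nonlinear set $A(a+A) = \{b(a+c) : b,c \in A\}$ as the image of a single planar point set under a linear projection, and then to invoke Corollary~\ref{Good direction} with $\Theta = A$, so that the ``pinning'' parameter $a$ emerges as the good direction the corollary guarantees. This is the same philosophy as in Theorem~\ref{Pinned dot}: we arrange matters so that choosing the direction $\theta$ inside $A$ is exactly what pins an element $a \in A$.

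Concretely, I would introduce the point set
\[
E = \{ (bc, b) : b, c \in A\} \subseteq \mathbb{F}_q^2,
\]
and observe that for every $a \in \mathbb{F}_q$,
\[
E \cdot (1,a) = \{ bc + ab : b,c \in A\} = \{ b(a+c) : b,c \in A\} = A(a+A),
\]
so that $|A(a+A)|$ is precisely the size of the projection of $E$ onto the direction $(1,a)$. The key count is then $|E|$: here the hypothesis $0 \notin A$ is essential, since every $b \in A$ is invertible and so $c \mapsto bc$ is injective on $A$. Thus for each fixed $b$ the first coordinate runs over the $|A|$ distinct values of $bA$, while the second coordinate $b$ separates different choices of $b$, giving $E$ as a disjoint union of $|A|$ points over each $b \in A$ and hence $|E| = |A|^2$.

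With $\Theta = A$ this yields $|E||\Theta| = |A|^2 \cdot |A| = |A|^3$, which exceeds $q^2$ exactly when $|A| > q^{2/3}$. Applying Corollary~\ref{Good direction} to $E$ and $\Theta = A$ then produces some $\theta \in A$, which we rename $a$, with $|E \cdot (1,a)| > q/2$, i.e. $|A(a+A)| > q/2$, completing the argument. I expect the only genuine obstacle to be the initial modeling step: recognizing that the variable $a$ should be absorbed into the projection direction $\theta$ rather than into the point set, which is what converts the product–sum set $A(a+A)$ into a linear projection and lets the corollary hand back the pinned element; once $E$ is chosen, everything reduces to the elementary cardinality count above.
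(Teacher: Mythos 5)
Your proposal is correct and follows essentially the same route as the paper: the same point set $E = \{(bc,b) : b,c \in A\}$, the same identification $E \cdot (1,a) = A(a+A)$, and the same application of Corollary~\ref{Good direction} with $\Theta = A$. Your explicit justification that $0 \notin A$ makes $(b,c) \mapsto (bc,b)$ injective (so $|E| = |A|^2$) is a detail the paper states without elaboration, but it is the identical argument.
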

\begin{proof}
Let $\theta \in \mathbb{F}_q$. The set $A(\theta+A)$ consists of elements of the form $bc + \theta b$ for $b,c \in A$. It is therefore the projection of the set 
\[
E = \{ (bc, b) : b,c \in A\}
\]
onto the direction $(1,\theta)$. The map $(b,c) \to (bc,b)$ is a bijection between $A \times A$ and $E$ and so $|E| = |A|^2$. Applying Corollary~\ref{Good direction} to $E$ and $\Theta = A$ gives the claimed bound.
\end{proof}

Similar ideas are used in \cite{yazici2015growth} to prove the following lower bound for $|A(A+A)|$.
\begin{theorem}[Aksoy-Yazici, Murphy, Rudnev, and Shkredov]\label{YMRS}
Let $p$ be an odd prime and let $A$ be a subset of $\mathbb{F}_p$.
Then
\[
|A(A + A)| = \Omega(\min\{p, |A|^{3/2}\}).
\]
\end{theorem}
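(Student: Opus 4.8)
The plan is to split along the two regimes hidden inside $\min\{p,|A|^{3/2}\}$. When $|A|>p^{2/3}$ we have $|A|^{3/2}>p$, so the target is merely $|A(A+A)|=\Omega(p)$, and this is already supplied by Theorem~\ref{A(a+A)}: it produces $a\in A$ with $|A(a+A)|>p/2$, and $A(a+A)\subseteq A(A+A)$. So the real content lies in the small-set regime $|A|\le p^{2/3}$, where I must prove $|A(A+A)|\gg|A|^{3/2}$. Here the elementary identity of Lemma~\ref{var i} is not enough — as with Theorem~\ref{RRS}, one genuinely needs a positive-characteristic incidence bound, which is where the prime-field hypothesis enters.

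For small sets I would pass to a collision count. Writing $r(x)=|\{(a,b,c)\in A^3:a(b+c)=x\}|$, Cauchy--Schwarz gives $|A|^6=\bigl(\sum_x r(x)\bigr)^2\le |A(A+A)|\cdot N$, where $N=\sum_x r(x)^2$ is the number of solutions of
\[
a_1(b_1+c_1)=a_2(b_2+c_2),\qquad a_i,b_i,c_i\in A .
\]
It therefore suffices to show $N\ll|A|^{9/2}$ when $|A|\le p^{2/3}$. First I would peel off the degenerate solutions — those with some $a_i=0$ or with $b_1+c_1=0$ (equivalently $b_2+c_2=0$) — and check they number $O(|A|^4)$, which is absorbed since $|A|^4\le|A|^{9/2}$.

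The heart of the matter is bounding the remaining $N$ by Rudnev's point--plane incidence theorem in $\mathbb{F}_p^3$ \cite{Rudnev}. I would rearrange the equation as $a_1b_1-a_2b_2-a_2c_2+a_1c_1=0$ and read it, through the $3$--$3$ split of the variables, as an incidence between the point $(b_1,b_2,c_2)$ and the plane $\{(X,Y,Z):a_1X-a_2(Y+Z)+a_1c_1=0\}$ indexed by $(a_1,a_2,c_1)$. The crucial payoff of this particular split is that the point set is the full grid $A\times A\times A$, so it has exactly $|A|^3$ distinct points and, because a line in $\mathbb{F}_p^3$ meets a Cartesian grid in at most $|A|$ points, its collinearity parameter is automatically $k\le|A|$. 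With $|A|^3\le p^2$ in force (this is precisely the small-set range), Rudnev's bound then yields a main term $\sim |A|^6/p$ and an error $\ll|A|^{1/2}\cdot|A|^3+k\cdot|A|^3=|A|^{9/2}+|A|^4$, so that $N\ll |A|^6/p+|A|^{9/2}$. Feeding this back gives $|A(A+A)|\gg\min\{p,|A|^{3/2}\}$.

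The main obstacle is not the collinearity — the grid encoding disposes of it cleanly — but rather engineering the reduction so that all of Rudnev's hypotheses hold simultaneously: the variable split must realize the bilinear relation as an honest point--plane incidence while keeping the point family a low-collinearity grid of size $O(p^2)$, and the planes must be counted with the correct multiplicity (via the weighted form of the theorem, since distinct triples $(a_1,a_2,c_1)$ can give proportional coefficient vectors). This is the bookkeeping carried out in \cite{yazici2015growth}, and it is exactly the step that forces the restriction to prime fields and to the range $|A|\le p^{2/3}$, neither of which the elementary Lemma~\ref{var i} can circumvent to reach the exponent $3/2$.
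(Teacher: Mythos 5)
First, a point of orientation: the paper never proves Theorem~\ref{YMRS} --- it is quoted from \cite{yazici2015growth}, with the one-line remark that the proof ``modifies the method of \cite{Rudnev,RRS} to allow coordinate transformations such as $(bc,c)\mapsto(b,c)$.'' So your proposal must be judged against that cited argument, and in outline it matches it: the two regimes are split correctly, the large-set case $|A|>p^{2/3}$ does follow from the paper's Theorem~\ref{A(a+A)} (after deleting $0$ from $A$ if necessary, since $A(a+A)\subseteq A(A+A)$), and the small-set case is correctly reduced by Cauchy--Schwarz to showing that the number $N$ of solutions of $a_1(b_1+c_1)=a_2(b_2+c_2)$ is $\ll |A|^{9/2}$ when $|A|^3\le p^2$, with the degenerate solutions contributing $O(|A|^4)$. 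The identification of Rudnev's point--plane theorem as the engine, of the collinearity bound $k\le|A|$, and of $|A|^3 \le p^2$ as the natural window are all right.

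The genuine gap is the step you dismiss as ``bookkeeping.'' In your encoding the plane $a_1X-a_2(Y+Z)+a_1c_1=0$ depends only on $(a_2/a_1,\,c_1)$, so a single plane occurs with multiplicity $r_{AA^{-1}}(a_2/a_1)$, which can be as large as $|A|$. The ``weighted form of the theorem'' you invoke does not exist as a black box: any bound of the shape (multiplicity-counted number of planes)$\times(\sqrt{|P|}+k)$ is simply false --- one plane repeated $M$ times and containing a planar grid of $|A|^2$ points of $P=A\times A\times A$ already violates it --- and the workable weighted variants lose exactly the factors you cannot afford. Concretely, dyadically decomposing by weight and applying Rudnev's theorem in the primal form forces padding of the sparse high-weight levels (cost: a factor $w_{\max}\approx|A|$, giving $N\ll|A|^{11/2}$), while the dual form is poisoned because all planes with a fixed $c_1$ contain the common line $\{X=-c_1,\ Y+Z=0\}$, so the dual collinearity parameter is the number of ratios, up to $|A|^2$, giving $N\ll|A|^5$. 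Either way one only reaches $|A(A+A)|\gg|A|$, and the exponent $3/2$ is gone. The actual proof in \cite{yazici2015growth} does not manage multiplicities at all: it \emph{removes} them, and this is precisely what the quoted ``coordinate transformation'' is for. One splits the six variables asymmetrically, taking
\[
P=\{(a_1b_1,\,a_1,\,b_2): a_1\in A\setminus\{0\},\ b_1,b_2\in A\},\qquad
\pi_{a_2,c_1,c_2}=\{(X,Y,Z): X+c_1Y-a_2Z-a_2c_2=0\},
\]
with $(a_2,c_1,c_2)\in(A\setminus\{0\})\times A\times A$. Then $(a_1b_1,a_1,b_2)\in\pi_{a_2,c_1,c_2}$ if and only if $a_1(b_1+c_1)=a_2(b_2+c_2)$; the points are pairwise distinct; since the $X$-coefficient is $1$, distinct triples give distinct planes, so there are no weights on either side; $P$ lies in the union of the $|A|$ planes $X=b_1Y$, whence any line meets $P$ in at most $|A|$ points; and $|P|=|\Pi|\approx|A|^3\le p^2$. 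Now the unweighted theorem of \cite{Rudnev} applies verbatim and gives $N\ll |A|^6/p+|A|^{9/2}+|A|^4\ll|A|^{9/2}$ in the small-set regime, which is exactly what your Cauchy--Schwarz step needs. So your strategy is the right one, but as written its central step fails; the fix is not bookkeeping but the re-encoding that constitutes the main idea of the cited paper.
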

The proof of this theorem modifies the method of \cite{Rudnev,RRS} to allow coordinate transformations such as $(bc,c)\mapsto (b,c)$.

\section{Pinned algebraic distances}
\label{Iosevich-Rudnev}

Recall that the algebraic distance between two points $u=(u_1,u_2) ,v=(v_1,v_2) \in \mathbb{F}_q^2$ is defined as
\[
\| u - v\| = (u-v) \cdot (u-v) = (u_1-v_1)^2 + (u_2-v_2)^2.
\] 

There is rich literature on  the set of distances determined by a set $E \subseteq \mathbb{F}_q^2$:
\[
\Delta(E) = \{ \| u - v \| : u,v \in E \},
\]
as well as on the set of distances pinned at some $e\in E$
\[
\Delta_e(E) = \{ \| u - e \| : u \in E \}.
\]
The introduction of \cite{CEHIK2012} is an excellent reference for this type of so-called discrete Falconer questions. The state of the art can be summarised in the following theorem.

\begin{theorem}[Hanson, Lund, and Roche-Newton]\label{Perp Bis}
There exists and absolute constant $c>0$ with the following property. Let $E \subseteq \mathbb{F}_q^2$. Suppose that $|E| > q^{4/3}$. There exists $e \in E$ such that $|\Delta_e(E)| > c q$.
\end{theorem}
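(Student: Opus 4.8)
The plan is to reduce the pinned-distance problem to an incidence count between the points of $E$ and the \emph{perpendicular bisectors} of pairs of points of $E$. For $e\in E$ and $t\in\mathbb{F}_q$ write $n_e(t)=|\{v\in E:\|v-e\|=t\}|$, so that $|\Delta_e(E)|$ is the number of values $t$ with $n_e(t)\neq 0$. Cauchy--Schwarz gives $|E|^2=\big(\sum_t n_e(t)\big)^2\le |\Delta_e(E)|\,Q_e$ with $Q_e=\sum_t n_e(t)^2$, and summing over $e$ yields $|E|^3=\sum_{e\in E}\big(\sum_t n_e(t)\big)^2\le \big(\max_{e}|\Delta_e(E)|\big)\sum_{e\in E}Q_e$, i.e.
\[
\max_{e\in E}|\Delta_e(E)|\ \geq\ \frac{|E|^3}{\sum_{e\in E}Q_e}.
\]
Now $\sum_{e\in E}Q_e=|E|^2+T$, where $T=|\{(e,u,v)\in E^3:u\neq v,\ \|e-u\|=\|e-v\|\}|$, and since $|E|>q^{4/3}>q$ the diagonal term $|E|^2$ is harmless. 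Thus the whole theorem reduces to the ``expected'' bound $T=O(|E|^3/q)$, which immediately gives $\max_{e}|\Delta_e(E)|=\Omega(q)$ at the maximizing $e\in E$.

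Next I would read $T$ geometrically. Expanding $\|e-u\|=\|e-v\|$ gives $2\,e\cdot(v-u)=\|v\|-\|u\|$, so $e$ lies on the perpendicular bisector $B(u,v)$, which is a genuine line whenever $u\neq v$. Hence $T=\sum_\ell r(\ell)\,i(\ell)$, where $i(\ell)=|E\cap\ell|$ is the incidence function of \eqref{i} and $r(\ell)=|\{(u,v)\in E^2:u\neq v,\ B(u,v)=\ell\}|$ records how often $\ell$ arises as a bisector. One factor is handled for free by Lemma~\ref{var i}:
\[
T\ \le\ \Big(\sum_\ell r(\ell)^2\Big)^{1/2}\Big(\sum_\ell i(\ell)^2\Big)^{1/2}\ \le\ \Big(\sum_\ell r(\ell)^2\Big)^{1/2}\big(|E|^2+q|E|\big)^{1/2}.
\]
Since $|E|>q$, the target $T=O(|E|^3/q)$ follows once I prove the \emph{bisector energy} bound $\sum_\ell r(\ell)^2=O(|E|^4/q^2)$. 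This is exactly the random value: there are $\Theta(q^2)$ lines, so a typical bisector multiplicity is $|E|^2/q^2$, and one checks that with these heuristic values the displayed Cauchy--Schwarz is tight, reproducing $T\approx|E|^3/q$.

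The heart of the matter is therefore the energy $R=\sum_\ell r(\ell)^2$, which counts quadruples $(u,v,u',v')\in E^4$ for which $(u,v)$ and $(u',v')$ share a \emph{single} perpendicular bisector $\ell$; equivalently the reflection $\sigma_\ell$ sends $u\mapsto v$ and $u'\mapsto v'$ simultaneously. Thus $R$ is a reflection energy: the number of pairs of point-pairs of $E$ that are congruent through one reflection. I would bound it by parametrising reflections (each determined by a normal direction together with a translation constant) and recasting the count as a point--plane incidence problem in $\mathbb{F}_q^3$, to which a Guth--Katz/Rudnev-type point--plane incidence theorem applies. It is precisely this sharper input, rather than the elementary second moment of Lemma~\ref{var i}, that delivers the near-random bound $R=O(|E|^4/q^2)$; for general point sets the bare second-moment method only reaches the weaker threshold $q^{3/2}$, and the point--plane bound is what lowers the exponent to $4/3$. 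The threshold $|E|>q^{4/3}$ enters exactly through the error term of this estimate: it is the regime in which the main term $|E|^4/q^2$ dominates the Rudnev error contribution.

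The step I expect to be the main obstacle is this energy estimate, and two issues must be confronted before it can be run cleanly. First, the point--plane machinery that beats the $q^{3/2}$ barrier genuinely requires either prime characteristic or a non-concentration hypothesis excluding the degenerate situation in which most of $E$ lies on $O(1)$ lines or inside a subfield grid; one must either restrict $q$ accordingly or dispose of the concentrated case directly. Second, the isotropic directions $w$ with $\|w\|=w\cdot w=0$ (present when $-1$ is a square) make the reflection-congruence picture degenerate even though the bisector remains an honest line: the pairs $(u,v)$ with $\|u-v\|=0$ and the bisectors with isotropic normal must be separated off and shown to contribute only a lower-order amount to $T$ and to $R$ before the clean count above applies.
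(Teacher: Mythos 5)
The paper does not prove this theorem: it quotes it from \cite{HLRN2016} as the state of the art, and its own contribution in this direction, Theorem~\ref{Pinned Dist}, is explicitly weaker (it needs $|E|>(2q)^{3/2}$ for general sets, or a line $\ell$ with $|E||E\cap\ell|>2q^2$). So your proposal has to be measured against the Hanson--Lund--Roche-Newton argument itself. Its skeleton --- pin, Cauchy--Schwarz, the isosceles-triple count $T$, bisector lines, bisector energy --- is indeed the skeleton of that proof, so the architecture is right. (One omission throughout: $q$ must be odd, as in Theorem~\ref{Pinned Dist}; in characteristic $2$ the equation $2e\cdot(v-u)=\|v\|-\|u\|$ does not define a line.)

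The central gap is quantitative, and it is fatal as written: you reduce the theorem to the bisector-energy bound $R=\sum_\ell r(\ell)^2=O(|E|^4/q^2)$, and that statement is \emph{false} under the hypothesis $|E|>q^{4/3}$. Take $E=A\times\mathbb{F}_q$ with $|A|\approx q^{1/3}$, so $|E|\approx q^{4/3}$. The reflection in any horizontal line $\ell_c=\{(x,c):x\in\mathbb{F}_q\}$ maps $E$ to itself, so $r(\ell_c)=|A|(q-1)\approx |E|$, and summing over the $q$ choices of $c$ already gives
\[
R\ \gtrsim\ q|E|^2\ \approx\ q^{11/3}\ \gg\ q^{10/3}\ \approx\ \frac{|E|^4}{q^2}.
\]
For this set your Cauchy--Schwarz step bounds $T$ only by $\sqrt{R}\sqrt{|E|^2+q|E|}\gtrsim q^{19/6}$, above the budget $|E|^3/q\approx q^3$; so the reduction itself is too lossy, not merely the unproven energy claim. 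The loss comes from using the raw second moment of Lemma~\ref{var i} instead of its variance form. Writing
\[
T\;=\;\frac{|E|}{q}\sum_\ell r(\ell)\;+\;\sum_\ell r(\ell)\Bigl(i(\ell)-\frac{|E|}{q}\Bigr)\;\le\;\frac{|E|^3}{q}\;+\;\sqrt{R}\,\sqrt{q|E|},
\]
using $\sum_\ell r(\ell)\le|E|^2$ and the second statement of Lemma~\ref{var i}, the main term comes out for free and one only needs $R=O(|E|^5/q^3)$ --- a bound the example above meets with equality. Even this is not unconditional; what is needed, and what \cite{HLRN2016} in effect establishes, is an estimate that degrades with the maximum number $M$ of points of $E$ on a line or circle, of the rough shape $R\lesssim |E|^4/q^2+M|E|^2$. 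The exponent $4/3$ then falls out because $M\le q+1$ always: $\sqrt{q|E|\cdot M|E|^2}\le |E|^3/q$ exactly when $M\le|E|^3/q^3$, and $|E|\gtrsim q^{4/3}$ makes the right-hand side at least $q$.

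Two further steps of your plan would fail as described. First, Rudnev's point--plane theorem cannot be the engine in this regime: converting $R$ into a point--plane problem produces $\sim|E|^2\ge q^{8/3}$ points and planes, while Rudnev's hypothesis caps the number of points at $O(p^2)$, $p$ the characteristic; this fails even for prime $q$, let alone prime powers. In the large-set regime of this theorem the correct engine is precisely the Vinh-type second-moment machinery of this paper (Lemmas~\ref{var i} and \ref{var i HD}) applied to the linear equations extracted from the system $\|u-u'\|=\|v-v'\|$, $\|u-v'\|=\|u'-v\|$; your remark that the ``bare second-moment method only reaches $q^{3/2}$'' while the point--plane bound ``lowers the exponent to $4/3$'' has the roles of the two tools backwards. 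Second, the isotropic degeneracies are not lower order and cannot be absorbed: if $-1$ is a square and $E$ contains $q^{1/3}$ parallel isotropic lines, then every pair of distinct points on such a line has that very line as its bisector, whence $T\gtrsim q^{1/3}\cdot q^3\gg |E|^3/q$, and the Cauchy--Schwarz lower bound on $\max_e|\Delta_e(E)|$ genuinely degrades to $q^{2/3}$. Such sets must be dispatched by a separate direct argument --- for instance, a pin on one isotropic line realizes every element of $\mathbb{F}_q$ as a distance to a parallel isotropic line --- rather than estimated away as an error term.
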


Our method recovers the above result for sets $E \subseteq \mathbb{F}_q^2$ that have ``many'' incidences with a line in $\mathbb{F}_q^2$. In some ways, our method is best compared with the following older theorem, which is weaker than that of Hanson, Lund, and Roche-Newton.  

\begin{theorem}[Chapman, Erdo{\u{g}}an, Hart, Iosevich, and Koh]\label{Disc Falc}
Let $E \subseteq \mathbb{F}_q^2$.
\begin{enumerate}
\item[(i)] There exists a constant $c_q$ depending only on $q$ and not on $E$ such that $|\Delta(E)| > c_q q$ provided that $|E| > q^{4/3}.$
\item[(ii)] There exists $e \in E$ such that $|\Delta_e(E)| > q/2$ provided that $|E| > q^{3/2}.$
\item[(iii)] Suppose that $E = A \times A$ for some $A \subseteq \mathbb{F}_q$. There exists $e \in A \times A$ such that $|\Delta_e(A \times A)| > q/2$ provided that $|A| > q^{2/3}.$
\end{enumerate}
\end{theorem}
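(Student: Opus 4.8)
The unifying device is the standard paraboloid lift. Writing $\|v\|=v\cdot v$ and expanding $\|v-e\|=\|v\|-2\,e\cdot v+\|e\|$, observe that for a \emph{fixed} pin $e=(e_1,e_2)$ the difference $\|v-e\|-\|e\|$ is exactly the inner product of the lifted point $\widehat v=(v_1,v_2,\|v\|)$ with the direction $(-2e_1,-2e_2,1)$. Hence, setting
\[
\widehat E=\{(v_1,v_2,\|v\|):v\in E\}\subseteq\mathbb{F}_q^3 ,
\]
the pinned distance set satisfies $|\Delta_e(E)|=|\widehat E\cdot(-2e_1,-2e_2,1)|$, the size of a \emph{projection} of $\widehat E$. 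Since the lift is a bijection, $|\widehat E|=|E|$, and the pin $e$ enters only through the projection direction. I assume $q$ is odd throughout, so that $e\mapsto-2e$ is a bijection.

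Part (ii) is then immediate. I would apply Corollary~\ref{Good direction HD} with $d=3$, $z=1$, point set $\widehat E$, and $\Theta=\{(-2e_1,-2e_2):e\in E\}$, so that $|\Theta|=|E|$. The hypothesis $|\widehat E|\,|\Theta|=|E|^2>q^3$, that is $|E|>q^{3/2}$, produces some $\theta=(-2e_1,-2e_2)$ with $|\widehat E\cdot(\theta,1)|>q/2$, which is exactly a pin $e\in E$ with $|\Delta_e(E)|>q/2$.

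For part (iii) the improvement from $q^{3/2}$ down to $q^{2/3}$ comes from pinning along a rich line. Taking $E=A\times A$ and the diagonal $\ell=\{(a,a):a\in\mathbb{F}_q\}$, I would complete the square:
\[
\|v-(a,a)\|=(v_1^2+v_2^2)-2a(v_1+v_2)+2a^2 .
\]
The term $2a^2$ depends only on the pin, so along $\ell$ the distance is, up to this constant, the projection of the \emph{two}-dimensional set $G=\{(v\cdot v,\;v_1+v_2):v\in E\}$ onto $(1,-2a)$. The map $v\mapsto(v\cdot v,v_1+v_2)$ is at most two-to-one, its fibres being intersections of a circle and a line, so $|G|\ge|E|/2=|A|^2/2$. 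Applying Corollary~\ref{Good direction} to $G$ with $\Theta=\{-2a:a\in A\}$ (so $|\Theta|=|A|$) and using $|G|\,|\Theta|\ge|A|^3/2$, the hypothesis $|A|^3>2q^2$, i.e.\ $|A|>2^{1/3}q^{2/3}$, recovers the stated threshold up to the constant $2^{1/3}$ and yields a pin $(a,a)$ with $|\Delta_{(a,a)}(A\times A)|>q/2$.

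Part (i) is the genuinely different, and hardest, case: it asks for the \emph{full} distance set under the weaker hypothesis $|E|>q^{4/3}$, at the cost of the weaker conclusion $|\Delta(E)|>c_qq$. The natural plan is a second-moment argument: writing $\nu(t)=|\{(u,v)\in E^2:\|u-v\|=t\}|$, Cauchy--Schwarz gives
\[
|\Delta(E)|\ \ge\ \frac{\big(\sum_t\nu(t)\big)^2}{\sum_t\nu(t)^2}\ =\ \frac{|E|^4}{\sum_t\nu(t)^2},
\]
so it suffices to show the distance energy $\sum_t\nu(t)^2$ is close to its expected size $|E|^4/q$. I would estimate this energy through incidences: equal pinned distances $\|v-e\|=\|v'-e\|$ force $e$ onto the perpendicular bisector of $v$ and $v'$, so the energy is governed by point--line incidences with perpendicular bisectors, which feed into Lemma~\ref{var i} (or, after lifting circles to planes, into Lemma~\ref{var i HD}) together with Cauchy--Schwarz, exactly in the spirit of Vinh's bound in Theorem~\ref{thm:2}. \textbf{The main obstacle lies here.} Controlling $\sum_t\nu(t)^2$ requires bounding the \emph{multiplicity} of point pairs sharing a perpendicular bisector and separating off the degenerate isotropic directions (nonzero vectors of zero algebraic length); it is precisely these degeneracies, which have no analogue in the clean projection arguments for (ii) and (iii), that force the characteristic-dependent constant $c_q$ and the threshold $q^{4/3}$ rather than a sharp $q/2$.
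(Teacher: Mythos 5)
First, note how this statement sits in the paper: Theorem~\ref{Disc Falc} is quoted from \cite{CEHIK2012} as background and is never proved there. The authors' own contribution, Theorem~\ref{Pinned Dist}, only recovers statements \emph{similar to} Parts (ii) and (iii), with the weaker thresholds $(2q)^{3/2}$ and $(2q)^{2/3}$, and recovers nothing of Part (i). Measured against that benchmark, your proposal succeeds exactly where the paper's method succeeds and fails exactly where it fails.

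For Part (ii) your argument is correct (under the assumption that $q$ is odd, which Theorem~\ref{Pinned Dist} and the source also make) and is a genuinely different route from the paper's. The paper stays in the plane: it pigeonholes a line with $|E\cap\ell|\geq |E|/q$ points, applies an isometry and the at-most-two-to-one map $(u_1,u_2)\mapsto(u_1^2+u_2^2,-2(u_1+u_2))$, and invokes Corollary~\ref{Good direction}; the factor of $2$ lost in that map is precisely why it reaches only $|E|>(2q)^{3/2}$. Your paraboloid lift $v\mapsto(v_1,v_2,\|v\|)$ is injective, lets every point of $E$ serve as a pin rather than only points on a rich line, and feeds directly into Corollary~\ref{Good direction HD} with $d=3$ and $z=1$; since distinct $\theta$ give distinct hyperplane normals $(\theta,1)$, the corollary applies and you obtain the sharp threshold $|E|>q^{3/2}$ of the original theorem. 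This is cleaner and quantitatively stronger than what the paper itself extracts from its lemma. For Part (iii), by contrast, your argument \emph{is} the paper's own: your $G$ and directions $(1,-2a)$ are a reparametrisation of the paper's $E'=\{(u_1^2+u_2^2,-2(u_1+u_2))\}$ and $(1,a)$, and the two-to-one claim is sound (fibres are $\{(u_1,u_2),(u_2,u_1)\}$, as the paper verifies algebraically). It therefore inherits the same loss: you reach $|A|>2^{1/3}q^{2/3}$ rather than the stated $|A|>q^{2/3}$ --- a deficiency you flag, and one the paper shares with its threshold $(2q)^{2/3}$.

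The genuine gap is Part (i), and you have diagnosed it honestly rather than hidden it: what you give is a plan, not a proof. The Cauchy--Schwarz reduction to the distance energy $\sum_t\nu(t)^2$ is standard, but bounding that energy is not a corollary of Lemma~\ref{var i} or Lemma~\ref{var i HD}; the multiplicity of point pairs sharing a perpendicular bisector and the isotropic directions are exactly where incidence counting stops working. In \cite{CEHIK2012}, building on \cite{Iosevich-Rudnev2007}, Part (i) is proved by Fourier analysis, with Kloosterman-type exponential sum bounds for spheres as the key input. So as a proof of the literal three-part statement your proposal is incomplete, but the missing part lies beyond the method the paper develops, and the paper does not claim it either.
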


We prove a result, which implies similar results to Parts (ii) and (iii) and offers some additional geometric insight on when can a large distance set be achieved. In the case of dot products, we have seen that the crucial parameter is the cardinality of the direction set of $E$. For algebraic distances an analogous role is played by the maximum number of points of $E$ incident to a single line.

\begin{theorem}\label{Pinned Dist}
Let $q$ be an odd prime power, $E \subseteq \mathbb{F}_q^2$ and $\ell$ be a line in $\mathbb{F}_q^2$. 
\begin{enumerate}
\item[(i)] Suppose that $|E| |\ell \cap E| > 2 q^2$. There exists $e \in E \cap \ell$ such that $|\Delta_e(E)| > q/2$.
\item[(ii)] Hence there exists $e \in E$ such that $|\Delta_e(E)| > q/2$ provided that $|E| > (2 q)^{3/2}$.
\item[(iii)] For the special case when $E = A \times A$ for some $A \subseteq \mathbb{F}_q$, there exists $a \in A$ such that $|\Delta_{(a,a)}| > q/2$ provided that $|A| > (2 q)^{2/3}$. 
\item[(iv)] Moreover, if $|A-A| |A|^2 > 2 q^2$, there exists $a \in A$ such that 
\[
\frac{q}{2} \leq |(a-A)^2+D^2| = |\{ (a-b)^2+s^2 : b \in A, s \in D\}| \text{ where $D=A-A$},
\]
which implies that $|\Delta(A \times A)| > q/2$.
\end{enumerate}
\end{theorem}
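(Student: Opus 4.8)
The plan is to realize every pinned distance set as the image of a \emph{projection} of a planar set onto a varying direction, so that Corollary~\ref{Good direction} applies directly; the factor of $2$ appearing in each hypothesis will come from the fact that the relevant lift is at most two-to-one (this is where oddness of $q$ enters). For part (i) I would first translate so that $\ell$ passes through the origin and write $\ell = \{sb : s \in \mathbb{F}_q\}$ for a direction vector $b \neq 0$. For $e = sb \in \ell$ and $v \in E$, expanding $\|v-e\| = \|v\| - 2s\,(v \cdot b) + s^2\|b\|$ (with $\|v\| = v\cdot v$) shows that the final term is independent of $v$, so $\Delta_e(E)$ is a translate of $\{\|v\| - 2s(v\cdot b) : v \in E\}$. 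I would then introduce the lift
\[
\hat{E} = \{(\|v\|,\, v \cdot b) : v \in E\} \subseteq \mathbb{F}_q^2, \qquad \Theta = \{-2s : sb \in E\},
\]
so that $\Delta_e(E)$ is a translate of the projection $\hat{E} \cdot (1,-2s)$ and $|\Theta| = |\ell \cap E|$ (since $q$ is odd). Applying Corollary~\ref{Good direction} then yields some $s$, hence $e \in \ell \cap E$, with $|\Delta_e(E)| = |\hat{E}\cdot(1,-2s)| > q/2$ as soon as $|\hat{E}|\,|\Theta| > q^2$.

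The crux is the bound $|\hat{E}| \geq |E|/2$. The fibre of $v \mapsto (\|v\|, v\cdot b)$ over $(r,c)$ consists of the points on the line $\{v : v\cdot b = c\}$ where $\|\cdot\|$ equals $r$; parametrising that line and noting its direction $w = (b_2,-b_1)$ satisfies $\|w\| = \|b\|$, the restriction of $\|\cdot\|$ is a genuine quadratic, hence at most two-to-one, precisely when $\|b\| \neq 0$. This gives $|\hat{E}| \geq |E|/2$, so $|\hat{E}|\,|\Theta| \geq \tfrac12|E|\,|\ell\cap E| > q^2$ under the hypothesis $|E|\,|\ell\cap E| > 2q^2$, completing (i). The main obstacle is exactly the degenerate case $\|b\| = 0$ of an \emph{isotropic} line, which can occur only when $-1$ is a square in $\mathbb{F}_q$: there the null line through the origin collapses to a single point of $\hat{E}$ and the two-to-one bound fails, so this case must be excluded or handled separately. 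This never interferes with the applications below, since the lines used there are non-isotropic.

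For part (ii) I would find a good non-isotropic line by averaging. Summing $i(\ell)$ over all lines gives $|E|(q+1)$, and since each point lies on exactly two isotropic lines the incidences on non-isotropic lines total $|E|(q-1)$ over $q(q-1)$ such lines, so some non-isotropic $\ell$ has $i(\ell) \geq |E|/q$. Then $|E|\,i(\ell) \geq |E|^2/q > 2q^2$ whenever $|E| > (2q)^{3/2}$, and (i) applies. Part (iii) is the special case $E = A\times A$ with $\ell$ the diagonal spanned by $b=(1,1)$, where $\|b\| = 2 \neq 0$ is automatically non-isotropic, $|\ell\cap E| = |A|$, the hypothesis reads $|A|^3 > 2q^2$, and $\Delta_{(a,a)}(A\times A) = \{(a-b)^2+(a-c)^2 : b,c\in A\}$.

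Finally, for the sharper statement (iv) I would run the lifting argument directly on the two-variable set. Writing $(a-b)^2 + s^2 = a^2 + (b^2+s^2) - 2ab$ for $b \in A$, $s \in D$, I set
\[
\hat{F} = \{(b^2+s^2,\, b) : b \in A,\ s \in D\} \subseteq \mathbb{F}_q^2, \qquad \Theta = \{-2a : a \in A\},
\]
so that $(a-A)^2 + D^2$ is a translate of $\hat{F}\cdot(1,-2a)$. Since $s \mapsto s^2$ is two-to-one in odd characteristic, the map $(b,s)\mapsto(b^2+s^2,b)$ is at most two-to-one, giving $|\hat{F}| \geq |A||D|/2$; Corollary~\ref{Good direction} then delivers the bound once $|\hat{F}|\,|\Theta| \geq \tfrac12|A|^2|D| > q^2$, i.e.\ under $|A-A|\,|A|^2 > 2q^2$. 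The inclusion $(a-A)^2 + D^2 \subseteq D^2 + D^2 = \Delta(A\times A)$, which uses $a - A \subseteq A - A = D$, then yields $|\Delta(A\times A)| > q/2$.
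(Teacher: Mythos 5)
Your proposal is correct where the paper's own proof is correct, and it follows essentially the same route: realize each pinned distance set as a projection of a quadratic lift of $E$ onto a varying direction, show the lift is at most two-to-one (this is where odd $q$ enters), and apply Corollary~\ref{Good direction} with $\Theta$ coming from the pins on $\ell$. Parts (iii) and (iv) coincide with the paper's argument up to trivial bookkeeping: in (iv) the paper lifts to $\{(b^2+s^2,-2b) : b \in A, s \in D\}$ and projects onto $(1,a)$, while you lift to $\{(b^2+s^2,b)\}$ and project onto $(1,-2a)$.

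The substantive difference is in part (i), and it favours you. The paper disposes of the general line by asserting that ``by applying an isometry we may assume $\ell$ is the span of $(1,1)$,'' and never revisits the point. You instead run the lift for an arbitrary direction $b$ and isolate precisely where it needs $\|b\| \neq 0$: on an isotropic line the lift collapses $\ell$ itself to a single point and the two-to-one bound fails. This is a gap in the paper's proof, not only in yours: when $q \equiv 1 \pmod 4$ isotropic lines exist, all mutual distances on such a line vanish, so no isometry (or even similarity) can carry it to the span of $(1,1)$, and neither argument proves (i) for such $\ell$. (Even for non-isotropic $\ell$ the paper's reduction really requires similarities rather than isometries --- the span of $(1,0)$ is isometric to the span of $(1,1)$ only when $2$ is a square --- though this is harmless, since a similarity merely dilates distance sets; your direct argument avoids the issue entirely.) Your repair of part (ii), averaging so as to produce a \emph{non-isotropic}\/ popular line, is exactly what your restricted (i) then requires, and it works; the only slip is the claim that each point lies on exactly two isotropic lines, which holds only when $q \equiv 1 \pmod 4$ --- when $q \equiv 3 \pmod 4$ there are no isotropic lines at all, and the plain average over all $q(q+1)$ lines already yields a (necessarily non-isotropic) line with at least $|E|/q$ points, so the conclusion stands in both cases. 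In short: you establish (i) for all non-isotropic lines, which is the same scope the paper's proof actually achieves, and you cover (ii)--(iv) completely; a proof of (i) for isotropic lines (or a counterexample) is missing from both your argument and the paper's.
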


\begin{proof}
We follow a similar approach to the proof of Theorem~\ref{A(a+A)} in transforming $E$ in such a way that the conclusions can be deduced from Corollary~\ref{Good direction}.

(i) By applying an isometry in $\mathbb{F}_q^2$ we may assume that $\ell$ is the span of $(1,1)$. So $\ell \cap E = \{ \theta (1,1) : \theta \in \Theta\}$ for some $\Theta \subseteq \mathbb{F}_q$ of cardinality equal to $|\ell \cap E|$. In this notation we have the following.
\begin{align*}
\Delta_{(\theta, \theta)}(E) 
& = \{\|u - (\theta, \theta) \| : u \in E\} \\
& = \{\|u\| -2 u \cdot (\theta, \theta) + 2 \theta^2  : u \in E\} \\
& = \{u_1^2+u_2^2 -2 \theta (u_1 + u_2) + 2 \theta^2  : (u_1,u_2) \in E\}.
\end{align*}
Therefore
\[
|\Delta_{(\theta, \theta)}(E)|  = |\{u_1^2+u_2^2 -2 \theta (u_1 + u_2)  : (u_1,u_2) \in E\}| = |E' \cdot (1,\theta)|,
\]
where $E' = \{(u_1^2 + u_2^2, -2(u_1+u_2)) : (u_1,u_2) \in E\}$.

The next step is to prove that $|E'| \geq |E|/2$ by establishing that the map $f: E \to E'$ defined by $f((u_1,u_2)) = (u_1^2 + u_2^2, -2(u_1+u_2))$ maps at most two elements of $E$ to each element of $E'$. Indeed, if $f((u_1,u_2)) = f((w_1,w_2))$ then
\[
\left\{ \begin{matrix} u_1^2 - w_1^2 = w_2^2 - u_2^2 \\ u_1 - w_1 = w_2 - u_2 \end{matrix} \right..
\]
If $u_1 = w_1$, then the second equation gives $u_2 = w_2$. If If $u_1 \neq w_1$, then the second equation gives $u_2 \neq w_2$. Dividing the first equation by the second gives
\[
\left\{ \begin{matrix} u_1 + w_1 = w_2 + u_2 \\ u_1 - w_1 = w_2 - u_2 \end{matrix} \right.,
\]
which, together with the fact that 2 has an inverse in $\mathbb{F}_q$, implies that $u_1 = w_2$ and $u_2 = w_1$. This finishes the proof that $|E'| \geq |E| /2.$

By Corollary~\ref{Good direction}, there exists $\theta \in \Theta$ such that 
\[
|\Delta_{(\theta,\theta)}(E)| = |E' \cdot (1,\theta)| > \frac{q}{2}
\]
provided that $|E'| |\Theta| > q^2 \iff |E| |E \cap \ell| > 2 q^2$.

(ii) There are $|E| (q+1)$ incidences between $E$ and the set of all lines in $\mathbb{F}_q^2$. Moreover, there are $q(q+1)$ lines in $\mathbb{F}_q^2$. Therefore there exists a line incident to $|E| /q$ points in $E$. By Part (i) there exists $e \in E$ such that $|\Delta_e(E)| > q/2$ provided that $|E|^2 > 2 q^3.$

(iii) Follows from Part (i) by taking $\ell$ to be the span of $(1,1)$, because $|\ell \cap (A \times A)| = |A|$.

(iv) This last part cannot be deduced from Part (i) but has a very similar proof. Let $a \in A$:
\begin{align*}
|(a-A)^2+(A-A)^2| 
& = |\{ (a-b)^2+s^2 : b \in A, s \in A-A\}|\\
& = |\{a^2 - 2 ab + b^2 + s^2  : b \in A, s \in A-A\}|\\
& = |\{b^2 + s^2 - 2 ab  : b \in A, s \in A-A \}|\\
& = |E \cdot (1,a)|,    
\end{align*} 
where $E = \{(b^2 + s^2, -2b) : b \in A, s \in A-A\}$. It is straightforward to check that $|E| \geq |A| |A-A| /2$. By Corollary~\ref{Good direction} there exists $a \in A$ such that 
\[
|(a-A)^2+(A-A)^2|  = |E \cdot (1,a)| > \frac{q}{2}
\]
provided that $|E| |A| > q^2 \iff |A|^2 |A-A| > 2 q^2$.
\end{proof}

\section{Dot products and distances in higher dimensions}
\label{HD}

Questions on the number of dot products and distances determined by subsets of $\mathbb{F}_q^d$ for $d\geq2$ were investigated in detail by Chapman, Erdo{\u{g}}an, Hart, Iosevich, and Koh in \cite{CEHIK2012}. As we have seen in Section~\ref{HDim}, our method works more or less identically in $\mathbb{F}_q^d$ for all $d \geq 2$ and so can provide proofs of various results from the paper \cite{CEHIK2012}. As an illustration we prove the following theorem, which is a generalisation of a theorem of Shparlinski from \cite{Shparlinski2008}, who used multiplicative characters to establish it.

\begin{theorem}[Chapman, Erdo{\u{g}}an, Hart, Iosevich, and Koh]
Let $d \geq 2$, $A \subseteq \mathbb{F}_q$, and $z \in \mathbb{F}_q \setminus\{0\}$. Suppose that $|A| > q^{\frac{d}{2d-1}}$. There exist $a_1,\dots,a_{d-1} \in A$ such that
\[
|a_1 A + a_2 A + \dots + a_{d-1} A + z A | > \frac{q}{2}.
\] 
\end{theorem}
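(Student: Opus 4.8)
The plan is to realise the set $a_1 A + \cdots + a_{d-1} A + zA$ as the projection of a Cartesian product onto a direction whose final coordinate is the fixed scalar $z$, and then invoke the higher-dimensional good-direction result, Corollary~\ref{Good direction HD}. The whole argument is then a matter of matching the hypothesis $|E||\Theta| > q^d$ of that corollary against the assumed lower bound on $|A|$.

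First I would set $E = \{(c_1, \ldots, c_d) : c_1, \ldots, c_d \in A\} \subseteq \mathbb{F}_q^d$, the $d$-fold Cartesian product of $A$ with itself, so that $|E| = |A|^d$. For any $\theta = (a_1, \ldots, a_{d-1}) \in \mathbb{F}_q^{d-1}$, the projection of $E$ onto the direction $(\theta, z) = (a_1, \ldots, a_{d-1}, z)$ is exactly
\[
E \cdot (\theta, z) = \{a_1 c_1 + \cdots + a_{d-1} c_{d-1} + z c_d : c_1, \ldots, c_d \in A\} = a_1 A + \cdots + a_{d-1} A + zA.
\]
Thus bounding the cardinality of the target set is the same as bounding the size of a projection of $E$ in a direction with last coordinate $z$, which is precisely the object controlled by Corollary~\ref{Good direction HD}.

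Next I would choose $\Theta = \{(a_1, \ldots, a_{d-1}) : a_1, \ldots, a_{d-1} \in A\} \subseteq \mathbb{F}_q^{d-1}$, so that $|\Theta| = |A|^{d-1}$ and every $\theta \in \Theta$ has all of its coordinates lying in $A$. The hypothesis $|E||\Theta| > q^d$ of Corollary~\ref{Good direction HD} then reads $|A|^{d} \cdot |A|^{d-1} = |A|^{2d-1} > q^d$, which is exactly the assumption $|A| > q^{d/(2d-1)}$. Applying the corollary produces some $\theta = (a_1, \ldots, a_{d-1}) \in \Theta$ — equivalently, some $a_1, \ldots, a_{d-1} \in A$ — with $|E \cdot (\theta, z)| > q/2$, and by the identification above this is precisely the claimed inequality.

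The computation is essentially immediate once the setup is in place, so I do not anticipate a serious technical obstacle. The one point that genuinely requires care, and is really the crux, is recognising the correct encoding: the \emph{free} summation variables $c_1, \ldots, c_d$ must be packaged as the $d$-fold product point set $E$, while the \emph{pinned} coefficients $a_1, \ldots, a_{d-1}$ must be packaged as the $(d-1)$-fold product direction set $\Theta$, with $z$ held fixed in the final slot. The exponent $d/(2d-1)$ then emerges automatically, because $|E| = |A|^d$ and $|\Theta| = |A|^{d-1}$ multiply to $|A|^{2d-1}$, and this product is exactly what must exceed the threshold $q^d$.
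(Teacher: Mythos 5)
Your proposal is correct and is essentially identical to the paper's own proof: both take $E$ to be the $d$-fold Cartesian product of $A$ in $\mathbb{F}_q^d$ and $\Theta$ the $(d-1)$-fold product in $\mathbb{F}_q^{d-1}$, then apply Corollary~\ref{Good direction HD} with the observation that $|E||\Theta| = |A|^{2d-1} > q^d$ is exactly the stated hypothesis on $|A|$.
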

\begin{proof}
We apply Corollary~\ref{Good direction HD} to  $E = A \times \dots \times A$ (the $d$-fold Cartesian product of $A$ in $\mathbb{F}_q^d$) and $\Theta = A \times \dots \times A$ (the the $(d-1)$-fold Cartesian product of $A$ in $\mathbb{F}_q^{d-1}$). The hypothesis on $|A|$ implies that $|E| |\Theta| = |A|^{2d-1} > q^d$ and so there exists $v = (a_1,\dots,a_{d-1}, z)$ such that $|E \cdot v| > q/2$.
\end{proof}

\section[Generalisation of Lemma~1 to block designs]{Generalisation of Lemma~\ref{var i} to block designs}
\label{sec:block-designs}

A \emph{$(v,k,\lambda)$-block design}\/ consists of a set of points $X$ and a collection $L$ of subsets of $X$ called \emph{blocks}\/ such that
\begin{enumerate}
\item $|X|=v$,
\item $|\ell|=k$ for every block $\ell$ in $L$,
\item any two distinct points $x$ and $x'$ in $X$ are contained in exactly $\lambda$ blocks.
\end{enumerate}
We use $\ell$ to denote a typical element of $L$.
If a point $x$ in $X$ is contained in a block $\ell$ in $L$, we say that $x$ is \emph{incident}\/ to $\ell$.
See, for instance, the books \cite{jukna2011extremal, stinson2004combinatorial} for more details on combinatorial designs.

As our notation suggests, points and lines form a block design.
Specifically, if $X=\mathbb{F}_q^2$ and $L$ is the set of all lines in $\mathbb{F}_q^2$, then $(X,L)$ is a $(q^2,q,1)$-design, since $|X|=q^2$, every line contains $q$ points, and two distinct points are contained in exactly one line.

In the preceding example, not only does every line contain the same number of points, but every point is contained in the same number of lines.
In general, for any $(v,k,\lambda)$-design, there is a number $r$ (called the \emph{replication number}) such that every point in $X$ is incident to exactly $r$ blocks.

The replication number $r$ obeys two important equations:
\begin{equation}
  \label{eq:3}
  r(k-1)=\lambda(v-1).
\end{equation}
and
\begin{equation}
  \label{eq:2}
  r|X|=k|L|
\end{equation}
To prove \eqref{eq:3}, fix $x$ and let $r_x$ denote the number of blocks containing $x$; double counting pairs $(x',\ell)$ with $x'\not=x$ and $x,x'\in\ell$ shows that $r_x=\lambda(v-1)/(k-1)$ is independent of $x$. To prove \eqref{eq:2}, double count incidences.

We use $\ell(x)$ to denote the indicator function of a block $\ell$, so that $\ell(x)=1$ if $x$ is incident to $\ell$ and $\ell(x)=0$ otherwise.
If $E$ is a subset of $X$ we define the \emph{incidence function}\/ associated with $E$ by
\begin{equation}
  \label{eq:1}
  i_E(\ell)=i(\ell)=|\ell\cap E|.
\end{equation}

The following analogue of Lemma~\ref{var i} holds for any $(v,k,\lambda)$-design.
\begin{lemma}
  \label{lem:1}
Let $(X,L)$ be a $(v,k,\lambda)$-design with replication number $r$, and let $E$ be a subset of $X$.
If $i$ is the incidence function defined in \eqref{eq:1}, then
\[
\sum_{\ell\in L}i(\ell)^2 = \lambda|E|^2+(r-\lambda)|E|.
\]
Hence
\[
\sum_{\ell\in L}\left(i(\ell)-\frac{r|E|}{|L|} \right)^2\leq (r-\lambda)|E|.
\]
\end{lemma}
Note that $r|E|/|L|$ is the expected value of $i_E(\ell)$, so the second equation in Lemma~\ref{lem:1} is variance of $i_E(\ell)$, up to a factor of $1/|L|$.

The proof of Lemma~\ref{lem:1} is essentially the same as the proof of Lemma~\ref{var i}, however we need to use equations~\eqref{eq:3} and \eqref{eq:2}.

\begin{proof}[Proof of Lemma~\ref{lem:1}]
  First we compute the second moment of $i$.
\begin{align*}
\sum_\ell i(\ell)^2 
& = \sum_\ell \left( \sum_{v \in E} \ell(v) \right)^2 \\
& = \sum_\ell  \sum_{v,v' \in E} \ell(v) \ell(v') \\
& = \sum_{v \in E} \sum_\ell \ell(v) + \sum_{v\neq v' \in E} \sum_{\ell} \ell(v) \ell(v')\\
& =  r|E| + \lambda|E| (|E| - 1) \\
& = \lambda|E|^2 + (r-\lambda) |E|.
\end{align*}
In the penultimate line we used the facts that $r$ lines are incident to a point and that two distinct points are contained in $\lambda$ blocks.

Similar to the proof of Lemma~\ref{var i}, we have
\begin{align*}
\sum_\ell \left(i(\ell) - \frac{r|E|}{|L|}\right)^2 
& = \sum_\ell i(\ell)^2 - |L|\left( \frac{r|E|}{|L|}\right)^2\\
& = \sum_\ell i(\ell)^2 - \frac{r^2|E|^2}{|L|}\\
& =  r|E| + \lambda|E| (|E| - 1)  - \frac{r^2|E|^2}{|L|}\\
& = \lambda|E|^2+(r-\lambda)|E| - \frac{r^2|E|^2}{|L|}\\
& = \left(\lambda  - \frac{r^2}{|L|}\right)|E|^2+(r-\lambda)|E|.
\end{align*}
To finish the proof, we show that $\lambda - r^2/|L|\leq 0$, which implies that
\begin{equation}
  \label{eq:5}
  \sum_\ell \left(i(\ell) - \frac{r|E|}{|L|}\right)^2 = \left(\lambda  - \frac{r^2}{|L|}\right)|E|^2+(r-\lambda)|E| \leq  (r-\lambda) |E|,
\end{equation}
as desired.

By the equations for the replication number, $r/|L|=k/|X|$ and $\lambda = r(k-1)/(|X|-1)$.
Thus
\[
\lambda  - \frac{r^2}{|L|} = r\left(\frac{k-1}{|X|-1}-\frac k{|X|} \right) = r\frac{k-|X|}{|X|(|X|-1)}\leq 0,
\]
since $k\leq |X|$.
\end{proof}

\subsection*{Examples and applications}

As an application of Lemma~\ref{lem:1}, we have the following incidence theorem for $(v,k,\lambda)$-designs, first proved by Lund and Saraf \cite{lund2014incidence}.
\begin{theorem}[Lund and Saraf]
\label{thm:4}
  Let $(X,L)$ be a $(v,k,\lambda)$-design with replication number $r$.
The number of incidences between $P\subseteq X$ and $Q\subseteq L$ satisfies
\[
\left| I(P,Q)-|P||Q|r/|L|\right|\leq\sqrt{(r-\lambda)|P||Q|}
\]
\end{theorem}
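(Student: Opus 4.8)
The plan is to mimic the proof of Vinh's point-line incidence theorem (Theorem~\ref{thm:2}) verbatim, replacing the use of Lemma~\ref{var i} with its block-design analogue, Lemma~\ref{lem:1}. The quantity $r|E|/|L|$ plays the role that $|E|/q$ played in the $\mathbb{F}_q^2$ setting: it is precisely the expected number of incidences of a block with a point set, and Lemma~\ref{lem:1} controls the total squared deviation of $i(\ell)$ from this expectation by $(r-\lambda)|E|$. Here I take $E=P$ as the point set and restrict the sum over blocks to the subcollection $Q\subseteq L$.

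\textbf{First I would} write the incidence count as a sum of deviations and apply the triangle inequality, then Cauchy--Schwarz over the index set $Q$:
\begin{align*}
\left| I(P,Q) - \frac{|P||Q|r}{|L|}\right|
&= \left| \sum_{\ell \in Q}\left(i(\ell) - \frac{r|P|}{|L|}\right)\right| \\
&\leq \sum_{\ell \in Q}\left| i(\ell) - \frac{r|P|}{|L|}\right| \\
&\leq \sqrt{|Q| \sum_{\ell \in Q}\left(i(\ell) - \frac{r|P|}{|L|}\right)^2}.
\end{align*}
\textbf{Next I would} enlarge the sum from $Q$ to all of $L$ (each summand is nonnegative, so this only increases the bound) and invoke the second conclusion of Lemma~\ref{lem:1} with $E=P$, giving
\[
\sum_{\ell \in Q}\left(i(\ell) - \frac{r|P|}{|L|}\right)^2 \leq \sum_{\ell \in L}\left(i(\ell) - \frac{r|P|}{|L|}\right)^2 \leq (r-\lambda)|P|.
\]
Substituting this into the previous display yields $\left| I(P,Q) - |P||Q|r/|L|\right| \leq \sqrt{(r-\lambda)|P||Q|}$, which is the claim.

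\textbf{The main point to verify}, rather than a genuine obstacle, is simply that Lemma~\ref{lem:1} is stated for the deviation from the mean $r|E|/|L|$ and bounds the sum over \emph{all} blocks in $L$, so the restriction to $Q$ must be handled by dropping nonnegative terms \emph{before} applying the lemma; doing so in the wrong order would leave the bound mismatched. No new combinatorial input about the design is needed beyond what Lemma~\ref{lem:1} already packages, since the equations \eqref{eq:3} and \eqref{eq:2} for the replication number were used there. The argument is entirely parallel to Theorem~\ref{thm:2}, with $q$ replaced by $|L|/r$ in the mean and $q$ replaced by $(r-\lambda)$ in the variance bound.
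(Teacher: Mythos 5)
Your proof is correct and is exactly the argument the paper intends: the paper proves Theorem~\ref{thm:4} by stating that one repeats the proof of Theorem~\ref{thm:2} (triangle inequality, Cauchy--Schwarz over $Q$, enlarging the sum to all of $L$) with Lemma~\ref{lem:1} in place of Lemma~\ref{var i}, which is precisely what you did. Your closing remark about dropping the nonnegative terms before invoking the lemma is the right point of care, and it is handled correctly.
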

The proof of Theorem~\ref{thm:4} is the same as the proof of Theorem~\ref{thm:2}, with Lemma~\ref{lem:1} in place of Lemma~\ref{var i}.

We mention some examples of block designs, and applications of Lemma~\ref{lem:1} and Theorem~\ref{thm:4}.
\begin{enumerate}
\item \emph{Points and lines}.
We have already seen that points and lines in $\mathbb{F}_q^2$ form a $(q^2,q,1)$-design.

Applying Lemma~\ref{lem:1} to this block design reproves Lemma~\ref{var i}.

\item \emph{Points and hyperplanes}.
The arguments of Section~\ref{HDim} show that the set $X$ of points in $\mathbb{F}_q^d$ and the set $\mathcal{H}$ of hyperplanes in $\mathbb{F}_q^d$ form a $(v,k,\lambda)$-design with $v=q^d$, $k=q^{d-1}$, and $\lambda = (q^{d-1}-1)/(q-1)$.
We computed directly that $r=(q^d-1)/(q-1)$, which agrees with equation \eqref{eq:3};
similarly, we computed that $|\mathcal{H}|=qr$, which agrees with equation \eqref{eq:2}.

Applying Lemma~\ref{lem:1} to this block design reproves Lemma~\ref{var i HD}.

\item \emph{Points and $m$-dimensional affine subspaces}.
In \cite{lund2014incidence}, Lund and Saraf show that the set $X$ of points in $\mathbb{F}_q^d$ and the set $L$ of $m$-dimensional affine subspaces of $\mathbb{F}_q^d$ form a $(v,k,\lambda)$-design with replication number $r$, where
\begin{align*}
  |X| &= q^d \\
  |L| &= (1+o(1))q^{m(d+1-m)}\\
  r &= (1+o(1))q^{m(d-m)}\\
k&=q^m \\
\lambda &= (1+o(1))q^{(m-1)(d-m)}.
\end{align*}
Applying Theorem~\ref{thm:4} shows that
\[
\left| I(P,Q)-|P||Q|q^{-(d-m)}\right|\leq(1+o(1))\sqrt{q^{m(d-m)}|P||Q|}
\]
for any set of points $P$ in $\mathbb{F}_q^d$ and any set $Q$ of $m$-dimensional
subspaces of $\mathbb{F}_q^d$.
\end{enumerate}

In addition, Lemma~\ref{lem:1} can be used to prove Beck's theorem and
related variations for circles \cite{IRZ2015,lund2014incidence}.

\section{Comparison to graph theoretic proofs}


We give a graph-theoretic version of the proofs above, to compare our method with the eigenvalue method used by Vinh \cite{Vinh2011} and Lund and Saraf \cite{lund2014incidence}.
The basic idea is to interpret the incidence problem graph theoretically.
Using this interpretation, we can write the quantities in Lemma~\ref{var i} in terms of the \emph{adjacency operator $A$}\/ associated to the graph.
The bounds proved in \cite{lund2014incidence,Vinh2011} use a general method for bounding the \emph{edge discrepancy}\/ of a graph in terms of the eigenvalues of the adjacency operator; this is sometimes called the ``expander mixing lemma''.

The key to the proofs in \cite{lund2014incidence,Vinh2011} and the proof we give is the explicit form of the operator $A^TA$, where $A$ is the adjacency operator associated to the block design:
\begin{equation}
  \label{eq:6}
  A^TA = (r-\lambda)I+\lambda J,
\end{equation}
where $I$ is the $|X|\times |X|$ identity matrix, and $J$ is the $|X|\times |X|$ matrix where every entry is 1.
In \cite{lund2014incidence,Vinh2011}, equation~\eqref{eq:6} is used to compute the eigenvalues of $A^TA$, which are called the \emph{singular values}\/ of $A$ (since $A$ is not necessarily a square matrix, Lund and Saraf use the singular value decomposition instead of standard eigenfunction expansion).
In addition to sketching the spectral graph theory proof, we will sketch an alternate proof using equation~\eqref{eq:6}  directly.

\subsection*{Notation and a lemma}
To begin, we establish some necessary notation.
To every incidence problem or block design, we can associate a bipartite graph $G(X,L)$ whose vertex sets are
the set of points $X$ and the set of blocks $L$.
The pair $(x,\ell)$ is an edge of $G(X,L)$ if $x\in \ell$; that is, edges correspond to incidences.

If $(X,L)$ is a $(v,k,\lambda)$-block design, then every vertex in $X$ has degree $r$ and every vertex in $L$ has degree $k$.

Let $\mathbb{F}_q^X$ and $\mathbb{F}_q^L$ denote the vector spaces of functions on $X$ and $L$, respectively.
If $x\in X$, we use $x$ to denote the indicator on $\{x\}$, so that the elements of $X$ form a basis for $\mathbb{F}_q^X$;
similarly $\ell\in L$ denote the indicator on $\{\ell\}$.
We can endow $\mathbb{F}_q^X$ with an inner product $\ip--_X$ defined by
\[
\ip fg_X:=\sum_{x\in X}f(x)g(x).
\]
We define $\ip--_L$ on $\mathbb{F}_q^L$ in the same way.

The \emph{adjacency matrix}\/ $A$ of $G$ is defined by $a_{x\ell}=1$ if $(x,\ell)$ is an edge of $G(X,L)$ and $a_{x\ell}=0$ otherwise.
Using $\ell(x)$ to denote the indicator function on the line $\ell$, we have $a_{x\ell}=\ell(x)$.

Let $E$ be a subset of $X$ and let $\chi_E$ denote the indicator function on $E$.
The incidence function $i_E(\ell)$ associated with $E$ can be expressed in terms of the adjacency matrix:
\[
i_E(\ell)=\ip{A\chi_E}{\ell\,}_L.
\]

Before we give the alternate proofs a Lemma~\ref{lem:1}, we prove a lemma that expresses the variance of $i_E(\ell)$ in terms of the adjacency matrix.
This lemma is a key step of the expander mixing lemma as well (see \cite[Theorem 9.2.5]{alon2008probabilistic}, where $i_E(\ell)$ is denoted by $N_E(\ell)$).

\begin{lemma}
\label{lem:2}
Let $e=|E|/|X|$ and let $f(x)=\chi_E(x) - e$ denote the \emph{balanced function}\/ of $E$.
Then
\[
\ip{Af}{Af}_L=\sum_{\ell\in L}\left(i(\ell)-\frac{r|E|}{|L|} \right)^2.
\]
\end{lemma}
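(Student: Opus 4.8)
The plan is to compute $Af$ explicitly as an element of $\mathbb{F}_q^L$ and observe that it is exactly the \emph{balanced}\/ incidence function; the squared $L$-norm is then visibly the claimed sum. Everything reduces to two applications of the adjacency matrix together with one replication-number identity. First I would record how $A$ acts on the indicator of an arbitrary subset $S\subseteq X$: evaluating at a block gives
\[
(A\chi_S)(\ell)=\sum_{x\in X}a_{x\ell}\,\chi_S(x)=\sum_{x\in S}\ell(x)=|\ell\cap S|.
\]
Only two instances are needed. Taking $S=E$ yields $(A\chi_E)(\ell)=|\ell\cap E|=i(\ell)$, so $A\chi_E$ is precisely the incidence function viewed as a function on $L$. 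Taking $S=X$ yields $(A\chi_X)(\ell)=|\ell|=k$, since every block contains $k$ points; hence $A\chi_X=k\chi_L$, the constant function equal to the block size.

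Next I would invoke linearity of $A$. Since $f=\chi_E-e\,\chi_X$ with $e=|E|/|X|$, the two computations above give
\[
(Af)(\ell)=(A\chi_E)(\ell)-e\,(A\chi_X)(\ell)=i(\ell)-ek
\]
for every block $\ell$. The one point requiring an external input is the identification of the additive constant $ek$ with the claimed mean $r|E|/|L|$. Here I would use the replication identity $r|X|=k|L|$ from equation~\eqref{eq:2}, which gives $k=r|X|/|L|$ and therefore $ek=\frac{|E|}{|X|}\cdot\frac{r|X|}{|L|}=\frac{r|E|}{|L|}$. Thus $(Af)(\ell)=i(\ell)-\dfrac{r|E|}{|L|}$.

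Finally, expanding the inner product in the standard basis of $\mathbb{F}_q^L$ gives
\[
\ip{Af}{Af}_L=\sum_{\ell\in L}\bigl((Af)(\ell)\bigr)^2=\sum_{\ell\in L}\left(i(\ell)-\frac{r|E|}{|L|}\right)^2,
\]
which is the asserted identity. I do not expect a genuine obstacle: the result is essentially a direct computation, and the only non-formal ingredient is the replication relation~\eqref{eq:2} needed to match the additive constant. The conceptual content is simply the observation that $Af$ is literally the centered incidence function $i(\ell)-r|E|/|L|$, so that the left-hand side of Lemma~\ref{lem:1} and the quantity appearing in Lemma~\ref{lem:2} coincide term by term.
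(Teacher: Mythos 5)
Your proposal is correct and is essentially the paper's own proof: the paper establishes $\ip{Af}{\ell\,}_L = i_E(\ell) - ek$ by the same linearity computation (splitting $f = \chi_E - e\chi_X$ and using $\ip{A\chi_X}{\ell\,}_L = k$), then sums squares via the Pythagorean identity, which is exactly your expansion of $\ip{Af}{Af}_L$ in the standard basis of $\mathbb{F}_q^L$. The only cosmetic difference is ordering: the paper converts $ek$ to $r|E|/|L|$ via equation~\eqref{eq:2} at the outset, whereas you do it at the end.
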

\begin{proof}
Since
\[
ek = \frac{k}{|X|}|E|=\frac{r}{|L|}|E|
\]
by equation~\eqref{eq:2}, it is sufficient to show that
\begin{equation}
  \label{eq:adjvar}
  \ip{Af}{Af}_L=\sum_{\ell\in L}\left(i(\ell)-ek \right)^2.
\end{equation}

To prove equation~\ref{eq:adjvar}, we show that
\begin{equation}
  \label{eq:adjinc}
  \ip{Af}{\ell\,}_L=i_E(\ell)-ek,
\end{equation}
which implies the desired result by the Pythagorean identity:
\[
\ip{Af}{Af}_L=\sum_{\ell\in L}\ip{Af}{\ell\,}_L^2 =\sum_{\ell\in L} (i_E(\ell)-ek)^2.
\]
Finally, equation~\ref{eq:adjinc} follows from direct computation:
\begin{align*}
  \ip{Af}{\ell\,}_L&=  \ip{A(\chi_E-e\chi_X)}{\ell\,}_L\\
&= \ip{A\chi_E}{\ell\,}_L -  e\ip{A\chi_X}{\ell\,}_L\\
&=i_E(\ell)-ek.
\end{align*}
\end{proof}

\subsection*{Alternate proof by spectral graph theory}
From here, we can finish the spectral graph theory proof of \cite{Vinh2011} and \cite{lund2014incidence} in roughly three steps.
First, using the singular value decomposition, we could show that
\begin{equation}
  \label{eq:7}
  \sum_{\ell\in L}\left(i_E(\ell)-ek\right)^2\leq \lambda_2 e(1-e)|X|=\lambda_2(1-e)|E|,
\end{equation}
where $\lambda_2$ is the singular value of $A$ with the second largest magnitude.
This is equivalent to the equation
\[
\Vert Af\Vert_2^2=\ip{Af}{Af}_L \leq \lambda_2\ip{f}{f}_X,
\]
which says that the $L^2$ operator norm of $A$ restricted to the space of balanced functions is bounded by $\lambda_2$, combined with the equation
\[
\lambda_2\ip{f}{f}_X= \lambda_2 e(1-e)|X| = \lambda_2 (1-e)|E|.
\]

Second, we compute $\lambda_2$ for a $(v,k,\lambda)$-design with replication number $r$---by the explicit form for $A^TA$ (equation \eqref{eq:6}) we have $\lambda_2=r-\lambda$.

Finally, we combine $\lambda_2=r-\lambda$ with Lemma~\ref{lem:2} and equation~\eqref{eq:7} to prove Lemma~\ref{var i}:
\[
\sum_{\ell\in L}(i_E(\ell)-ek)^2\leq (r-\lambda) e(1-e)|X|\leq (r-\lambda)e|X| = (r-\lambda)|E|.
\]

\subsection*{Alternate proof by direct computation}
It is possible deduce Lemma~\ref{var i} directly from Lemma~\ref{lem:2} and equation~\eqref{eq:6}, without using the singular value decomposition.
Here is the key claim.
\begin{claim}
If $A$ is the adjacency matrix associated to the $(v,k,\lambda)$-design $(X,L)$ and $f(x)=\chi_E-e$ is the balanced function of a set of points $E\subseteq X$, then
\[
A^TAf(x) = (r-\lambda)f(x),
\]
where $r$ is the replication number of $(X,L)$.
\end{claim}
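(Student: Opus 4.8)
The plan is to read the claim directly off the explicit form of $A^TA$ recorded in equation~\eqref{eq:6}, using only the defining property of the balanced function $f$. The entire argument reduces to the observation that $f$ is annihilated by the all-ones matrix $J$.

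First I would recall, or quickly re-derive, equation~\eqref{eq:6}. The $(x,x')$ entry of $A^TA$ is $\sum_{\ell\in L}a_{x\ell}a_{x'\ell}=\sum_{\ell}\ell(x)\ell(x')$, which counts the number of blocks incident to both $x$ and $x'$. When $x=x'$ this count is the replication number $r$; when $x\neq x'$ it equals $\lambda$ by the definition of a $(v,k,\lambda)$-design. Hence $A^TA=(r-\lambda)I+\lambda J$, exactly as in equation~\eqref{eq:6}. (The only bookkeeping point worth care is that the diagonal entries are $r$, not $k$, which comes from the replication-number interpretation rather than the block-size condition.)

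Second, the key step is to check that $f$ lies in the kernel of $J$. The vector $Jf$ is constant, with every entry equal to $\sum_{x\in X}f(x)=\sum_{x\in X}\bigl(\chi_E(x)-e\bigr)=|E|-e|X|=0$, since $e=|E|/|X|$. Thus $Jf=0$. Applying equation~\eqref{eq:6} to $f$ then gives $A^TAf=(r-\lambda)f+\lambda Jf=(r-\lambda)f$, which is precisely the claim.

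I do not expect any genuine obstacle: once equation~\eqref{eq:6} is established, the content is simply that a balanced function is orthogonal to the constant function and is therefore killed by $J$. This is the same structural fact that the spectral proof invokes as ``$f$ is a combination of eigenvectors orthogonal to the Perron eigenvector,'' but here it is used directly to exhibit $f$ as an eigenvector of $A^TA$ with eigenvalue $r-\lambda$, bypassing the singular value decomposition entirely.
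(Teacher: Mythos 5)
Your proposal is correct and follows the same route as the paper: invoke the explicit form $A^TA=(r-\lambda)I+\lambda J$ from equation~\eqref{eq:6} and observe that $Jf=0$ because $\sum_{x\in X}f(x)=|E|-e|X|=0$. The only cosmetic difference is that you re-derive equation~\eqref{eq:6} inside the argument, whereas the paper proves the claim first and then derives \eqref{eq:6} via the inner-product identity $\ip{Ax}{Ax'}_L$ counting blocks through $x$ and $x'$ --- the same computation you give.
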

\begin{proof}
  By equation~\eqref{eq:6}, we have
\[
A^TAf(x)=(r-\lambda)If(x)+\lambda Jf(x).
\]
The claim follows by showing that $f$ is annihilated by $J$: for all $x$,
\[
Jf(x) = \sum_{x'\in X}f(x') = \sum_{x'\in X}(\chi_E(x')-e) = |E|-e|X| = 0,
\]
since $e=|E|/|X|$.
\end{proof}

Now we may compute $\ip{Af}{Af}_L$ \emph{exactly}:
\[
\ip{Af}{Af}_L = \ip{A^TAf}{f}_X  = \ip{(r-\lambda)f}{f}_L = (r-\lambda)\ip{f}{f}_X.
\]
All together,
\begin{align*}
  \sum_{\ell\in L}\left(i(\ell)-\frac{r|E|}{|L|} \right)^2=\ip{Af}{Af}_L
 &= (r-\lambda)\ip{f}{f}_X \\
&= (r-\lambda)(1-e)|E|\leq (r-\lambda)|E|,
\end{align*}
which reproves Lemma~\ref{lem:1}.

Finally, to complete the connection to our original argument, we derive equation~\eqref{eq:6}, which states that
\[
A^TA=(r-\lambda)I+\lambda J.
\]
That is, the diagonal entries of $A^TA$ are $r$ and the off-diagonal entries are $\lambda$.
Since $\ip{Ax}{Ax'}_L$ counts the number of lines incident to both $x$ and $x'$, we have
\[
\ip{A^TAx}{x'}_X=\ip{Ax}{Ax'}_L=
\begin{cases}
  r & \mbox{ if } x=x'\\
\lambda &\mbox{ if } x\not=x',\\
\end{cases}
\]
as desired.

\section{Comparison to Cilleruelo's Sidon set argument}

In this final section, we compare our method to another elementary method based on \emph{Sidon sets}, introduced by Cilleruelo \cite{Cilleruelo2012}.
Cilleruelo used Sidon sets to give an alternate proof of Vinh's incidence bound (Theorem~\ref{thm:2}).
We will see that our method is closely related to Cilleruelo's method.

A subset $S$ of an abelian group $G$ is called a \emph{Sidon set}\/ if any non-zero element of $G$ can be written as a difference of elements in $S$ in at most one way.
We write
\[
r_{A-B}(x) = |A \cap (x+B)|
\]
for the number of ways to express $x$ as a difference $a-b$ with $a$ in $A$ and $b$ in $B$.
In this notation, a Sidon set satisfies
\[
r_{S-S}(x)\leq 1
\]
for all $x\not=0$ in $G$.

Since
\[
|S|^2 = \sum_{x\in G}r_{S-S}(x)\leq |S|+|G|-1,
\]
we have $|S|\leq\sqrt{|G|}+1/2$.
The most interesting Sidon sets are those with $|S|=\sqrt{|G|}-\delta$ for some small quantity $\delta$.
In this setting Cilleruelo \cite{Cilleruelo2012} proved the following theorem.
\begin{theorem}[Cilleruelo]
\label{thm:1}
  Let $\delta$ be a constant and $S$ be a Sidon set in a finite abelian group $G$ with $|S|=\sqrt{|G|}-\delta$.
Then for all $A,B\subseteq G$, we have
\[
|\{(a,b)\in A\times B\colon a+b\in S\}|=\frac{|S|}{|G|}|A||B|+\theta(|A||B|)^{1/2}|G|^{1/4}
\]
with $|\theta|< 1+ \max\{\delta,0\} \frac{ |A|}{|G|}$.
\end{theorem}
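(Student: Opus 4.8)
The plan is to recognise the count as a point--block incidence problem for the family of translates of $S$ and then run the same second-moment-plus-Cauchy--Schwarz argument used for Theorem~\ref{thm:2}. Writing $N = |\{(a,b)\in A\times B : a+b\in S\}|$ and using $a+b\in S \iff a\in S-b$, I would first rewrite $N = \sum_{b\in B} i(b)$, where $i(g) = |A\cap(S-g)|$ is the incidence function of the point set $A$ against the block $S-g$. The family $\{S-g\}_{g\in G}$ has $|G|$ blocks, each of size $|S|$, and each point of $G$ lies in exactly $|S|$ of them, so the replication number is $r=|S|$. The Sidon property supplies the crucial pair condition: if $a\neq a'$ then $g\in(S-a)\cap(S-a')$ forces $a-a'$ to be a difference of two elements of $S$, which happens for at most one ordered pair and hence at most one $g$. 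Thus any two distinct points lie in at most one block, exactly the role played by $\lambda=1$ in Lemma~\ref{lem:1}; the translate family is a ``near-design'' for which the second-moment identity holds as an upper bound.

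Next I would carry out the second-moment computation verbatim as in Lemma~\ref{var i}. Splitting the diagonal and off-diagonal terms gives $\sum_{g\in G} i(g)^2 \le |A||S| + |A|(|A|-1) = |A|^2 + |A|(|S|-1)$, where the off-diagonal bound uses the pair condition above. Since $\sum_{g} i(g) = |A||S|$ and there are $|G|$ blocks, completing the square yields the variance bound
\[
V := \sum_{g\in G}\Bigl(i(g)-\tfrac{|A||S|}{|G|}\Bigr)^2 \le \Bigl(1-\tfrac{|S|^2}{|G|}\Bigr)|A|^2 + (|S|-1)|A|,
\]
which is precisely the expression appearing in Lemma~\ref{lem:1} with $\lambda=1$, $r=|S|$, and $|L|=|G|$.

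To finish I would apply the triangle and Cauchy--Schwarz inequalities exactly as in the proof of Theorem~\ref{thm:2}:
\[
\Bigl| N - \tfrac{|S|}{|G|}|A||B|\Bigr| = \Bigl|\sum_{b\in B}\bigl(i(b)-\tfrac{|A||S|}{|G|}\bigr)\Bigr| \le \sqrt{|B|\,V}.
\]
Setting the right-hand side equal to $\theta(|A||B|)^{1/2}|G|^{1/4}$ identifies $\theta^2 = V/(|A|\sqrt{|G|})$, so it remains to bound this ratio using $|S|=\sqrt{|G|}-\delta$. The term coming from $(|S|-1)|A|$ contributes $(|S|-1)/\sqrt{|G|} = 1-(\delta+1)/\sqrt{|G|} < 1$, while the term coming from the $|A|^2$ part contributes $|A|\,\delta(2\sqrt{|G|}-\delta)/|G|^{3/2}$.

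The main obstacle is extracting the sharp constant $|\theta|<1+\max\{\delta,0\}|A|/|G|$, and this is where the sign of $\delta$ must be handled separately. When $\delta\le 0$ the factor $1-|S|^2/|G|\le 0$, so the $|A|^2$ term is non-positive and $\theta^2 < 1$, giving $|\theta|<1$. When $\delta>0$ the $|A|^2$ term is positive; bounding $2\sqrt{|G|}-\delta<2\sqrt{|G|}$ shows $\theta^2 < 1 + 2\delta|A|/|G|$, and the elementary inequality $1+2x\le(1+x)^2$ with $x=\delta|A|/|G|$ upgrades this to $|\theta|<1+\delta|A|/|G|$. Combining the two cases gives the stated bound. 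The delicate point throughout is that the positive $|A|^2$-term in the variance survives only when $\delta>0$, which is exactly the origin of the $\max\{\delta,0\}$ factor; everything else is the routine bookkeeping already seen in Lemma~\ref{var i} and Theorem~\ref{thm:2}.
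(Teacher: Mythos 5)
Your proposal is correct and takes essentially the same route as the paper: you recast the count as incidences of $A$ against the translate family of $S$, use the Sidon property as the pair condition $\lambda_{x,x'}\leq 1$ to derive exactly the paper's variance bound~\eqref{eq:4}, and finish with the triangle and Cauchy--Schwarz inequalities as in Theorem~\ref{thm:2}. You in fact go slightly further than the text, which stops at \eqref{eq:4} and cites Section~2 of \cite{Cilleruelo2012} for the remaining deduction; your case analysis on the sign of $\delta$ (with strictness supplied by $\delta\geq -1/2$, i.e.\ the Sidon bound $|S|\leq\sqrt{|G|}+1/2$) is precisely the ``algebra'' the paper alludes to there.
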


The proof of Theorem~\ref{thm:1} hinges on a bound that is extremely similar to the bounds in Lemmas~\ref{var i}, \ref{var i HD}, and \ref{lem:1}:
\begin{equation}
  \label{eq:4}
  \sum_{x\in G}\left(r_{S-A}(x)-\frac{|S||A|}{|G|}\right)^2\leq |A|(|S|-1)+|A|^2 \left(1-\frac{|S|^2}{|G|}\right),
\end{equation}
c.f. Equation~(2.3) in \cite{Cilleruelo2012}. This coincidence can be explained: we can associate a block diagram to $G$ and $S$, which is not quite a $(v,k,\lambda)$-design, but is close to one.

Let the set of points $X$ be $G$ and let the set of blocks $L$ be the subsets of $G$ of the form $y-S$, with $y$ in $G$.
Thus $|X|=|L|=|G|$ and each block has size $k=|S|$.
Since $x\in y-S$ if $y\in x+S$, we see that the replication number $r$ is equal to $|S|$ as well.
Note that the incidence graph associated to this block design is simply the Cayley graph of $G$ defined by the Sidon set $S$.

This is not a $(v,k,\lambda)$-design, since each pair of points $x,x'$ in $X$ is contained in \emph{at most}\/ one block.
To prove this, note that
\begin{align*}
  |\{y-S \colon x,x'\in y-S\}|&=|\{y\in G\colon x+s=x'+s'=y,\mbox{ for some $s,s'$ in $S$}\}|\\
& = r_{S-S}(x-x')\leq 1.
\end{align*}
This says that if we let $\lambda_{x,x'}$ denote the number of blocks that contain both $x$ and $x'$, then we have $0\leq\lambda_{x,x'}\leq 1$, but $\lambda_{x,x'}$ is not independent of the pair $x,x'$ (in fact, $\lambda_{x,x'}=r_{S-S}(x-x')$).

Now we make the connection between equation~\eqref{eq:4} and our previous arguments.
If $A$ is a subset of $G$, then the incidence function $i_A(y)$ associated with $A$ is given by
\[
i_A(y)=r_{S-A}(y),
\]
thus
\[
\sum_{x\in G}\left(r_{S-A}(x)-\frac{|S||A|}{|G|}\right)^2 = \sum_{x\in G}\left(i_A(x)-\frac{r|A|}{|L|}\right)^2.
\]
Just as in the proof of Lemma~\ref{lem:1}, we have
\begin{align*}
 \sum_{x\in G}\left(i_A(x)-\frac{r|A|}{|L|}\right)^2&\leq r|A| +\lambda|A|(|A|-1)-\frac{r^2|A|^2}{|L|}\\
&= |A||S|+|A|(|A|-1)-\frac{|A|^2|S|^2}{|G|}\\
& = |A|(|S|-1)+|A|^2 \left( 1-\frac{|S|^2}{|G|}\right),
\end{align*}
where we have set $\lambda=1$ and used $\lambda_{x,x'}\leq\lambda$.

Section~2 in \cite{Cilleruelo2012} details how Theorem~\ref{thm:1} follows from \eqref{eq:4}. For the benefit of the reader we mention that, in contrast to previous arguments, one cannot get rid of the $|A|^2$ term because its coefficient might be positive. In the language we have developed this is explained as follows. We only have an upper bound on $\lambda_{x,x'}$. This implies the inequality $\lambda(v-1)\geq r(k-1)$, which goes in the wrong direction and cannot be used as a substitute to equation~\eqref{eq:3}. Instead, one must use the fact $|S|=\sqrt{|G|}-\delta$ for some $\delta \geq -1/2$ and do some algebra.
\begin{unremark}
  Suppose that in general $\lambda_{x,x'}\leq\lambda$.
The same argument shows that if $|S|=\sqrt{\lambda |G|}$ plus lower order terms, then we can still cancel the terms involving $|A|^2$.

For instance, we could prove Lemma~\ref{var i} by noting that the incidence graph of points and lines is $K_{2,2}$ free, hence $\lambda_{x,x'}\leq1$, and that $r=q=\sqrt{|X|}$.
\end{unremark}

\phantomsection

\addcontentsline{toc}{section}{References}

\bibliography{all}

\hspace{20pt} Department of Mathematics, University of Rochester, New York, USA.

\hspace{20pt} \textit{Email addresses}: \href{mailto:bmurphy8@ur.rochester.edu}{bmurphy8@ur.rochester.edu} and \href{mailto:giorgis@cantab.net}{giorgis@cantab.net}

\end{document}